\newtheorem{theorem}{Theorem}[section]
\newtheorem{lemma}[theorem]{Lemma}
\newtheorem{corollary}[theorem]{Corollary}
\newtheorem{proposition}[theorem]{Proposition}
\newtheorem{remark}[theorem]{Remark}
\begin{document}
\title[Isometries and Hermitian operators on  $\mathcal{B}_0(\triangle, E)$]{Isometries on the vector valued little Bloch space}

\author{Fernanda Botelho}
\address{Department of Mathematical Sciences, The University of Memphis, Memphis, TN 38152, USA}
\email{mbotelho@memphis.edu}

\author{James Jamison}
\address{Department of Mathematical Sciences, The University of Memphis, Memphis, TN 38152, USA}
\email{jjamison@memphis.edu}

\subjclass[2010]{46E15, 47B15, 47B38}
\keywords{The little Bloch space; Hermitian operators; surjective linear isometries; generators of one-parameter groups of surjective isometries; generalized bi-circular projections. }
\date{\today}

\begin{abstract}
In this paper we describe the surjective linear isometries on a vector valued little Bloch space with range space a strictly convex and smooth complex Banach space. We also describe the hermitian operators and the generalized bi-circular projections supported by these spaces.
\end{abstract}
\maketitle

\section{Introduction}
The type of linear surjective isometries supported by a given Banach space depends largely on the geometric properties of the  space. Often,  these operators  are described from their induced actions on the set of extreme points of the unit ball of the dual space. In addition of being a class of operators of great intrinsic interest, linear surjective isometries play a crucial role in the definition of other important classes of operators such as the hermitian operators and the generalized bi-circular projections.
In this paper, we  give a characterization of the surjective isometries on  vector valued little Bloch spaces and then derive the form of the hermitian operators and the generalized bi-circular projections.

The little Bloch space consists of all analytic functions $f$ defined on the open unit disc, $\triangle =\{ z \in \mathbb{C}: \, |z|<1\},$ with values in a smooth strictly convex  Banach space $E$ with norm $\| \cdot \|_E$, which satisfy the condition
\[ \lim_{|z|\rightarrow \, 1} \, (1-|z|^2) \|f'(z)\|_E=0.\]
This space with the norm $\|f\|_{\mathcal{B}}=\|f(0)\|_E+sup_{z\in \triangle} (1-|z|^2) \|f'(z)\|_E$ is a Banach space and will be  denoted by $\mathcal{B}_*(\triangle, \, E).$ Towards the characterization of the isometries on this setting, we start by considering   surjective isometries on $\mathcal{B}_0(\triangle, \, E),$
 the  subspace consisting
 of all functions in $\mathcal{B}_*(\triangle, \, E)$
  vanishing  at zero. Since $\mathcal{B}_*(\triangle, \, E)$  is isometrically isomorphic to $\mathcal{B}_0(\triangle, \, E) \oplus_1 E,$ and  $E$ does not support $L_1$-projections (see \cite{bg}), it follows that  $\mathcal{B}_0(\triangle, \, E)$ is also $L_1$-projection free. Hence  isometries on $\mathcal{B}_*(\triangle, \, E)$ admit a natural decomposition into an isometry on $\mathcal{B}_0(\triangle, \, E)$ and an isometry on $E$.

In order to derive a representation for a surjective isometry on $\mathcal{B}_0(\triangle, \, E)$, we  define an embedding of  $\mathcal{B}_0(\triangle, \, E)$ onto a closed subspace of  $\mathcal{C}_0(\triangle, E)$.  A  result due to Brosowski and Deutsch  describes the form for  the   extreme points of the unit dual ball of a subspace of $\mathcal{B}_0(\triangle, \, E),$ see \cite[Corollary 2.3.6, p. 33]{fj}.  Therefore the action of the adjoint operator on the extreme points provides a scheme for the representation of  surjective linear isometries.

It was shown by Vidav in \cite{vi, vi1} that hermitian operators are essentially the generators of strongly
 continuous one parameter groups of surjective isometries.  The knowledge of the surjective isometries defines naturally a class of operators containing  the hermitian operators. In particular, we will show that for $\mathcal{B}_0(\triangle, E)$ this implies that bounded hermitian operators are in a one-to-one correspondence to the bounded hermitian operators of the range space.   Another class of operators considered here and directly linked to surjective isometries are the generalized bi-circular projections (gbp), introduced in \cite{fosner}. These projections have been studied and characterized in a variety of spaces. In most known cases,  gbps can be expressed as the average of the identity with an isometric reflection, see for example \cite{bo_ja,bo_ja1,king} and also \cite{sz}. In the  last section of this paper we extend this representation to gbps on  this new collection of spaces.

\section{Extreme points of $\mathcal{B}_0(\triangle, \, E)^*_1$}

We consider the following  embedding of $\mathcal{B}_0(\triangle, \, E)$  into $\mathcal{C}_0(\triangle, \, E)$
\begin{equation*} \begin{array}{lccl}  \Phi:&\mathcal{B}_0(\triangle, \, E) & \rightarrow & \mathcal{C}_0(\triangle, \, E) \\
                      & f  &\rightarrow &  F=\Phi (f):  \triangle \rightarrow  E,\end{array}\end{equation*} given by  $\Phi (f)(z)=(1-|z|^2) f'(z).$
 The map $\Phi$  is a linear isometry onto a closed subspace  of $\mathcal{C}_0(\triangle, \, E)$, denoted by $\mathcal{Y}$.
 We recall that $\mathcal{C}_0(\triangle, \, E)$ is the set of all $E$- valued continuous functions defined on $\triangle$ such that $\lim_{|z| \rightarrow 1}F(z)=0$.

A result due to Brosowski and Deutsch (see \cite{fj}) implies that   extreme points of the unit ball of the dual space of $\mathcal{Y}$ are functionals  of the form $e^{\star} \delta_{z},$ with $e^{\star} \in E_1^*$, $z\in \triangle$ and $\delta_z :\mathcal{B}_0(\triangle, \, E)\, \rightarrow \, E$  the evaluation map  $\delta_z(f)=f(z).$

We  now show that all such functionals are indeed extreme points of $\mathcal{Y}^*_1$, we denote the set of extreme points  by $ext(\mathcal{Y}_1^*).$

\begin{lemma}
 $ext(\mathcal{Y}^*_1)=\{ e^*\delta_z : e\in E_1, \,\,\mbox{and}\,\,\, z \in \triangle\}.$
\end{lemma}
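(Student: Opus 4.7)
The plan is to establish the reverse inclusion to the one already supplied by the Brosowski--Deutsch result, namely that every functional of the form $e^*\delta_z$, with $e^*\in\mathrm{ext}(E_1^*)$ and $z\in\triangle$, is genuinely extreme in $\mathcal Y^*_1$. The strategy divides into two steps: (i) produce an explicit peaking function in $\mathcal Y$ whose sup-norm is attained only at the prescribed point $z$ and only on the unique norming vector for $e^*$; (ii) use this peaking function, together with the smoothness and strict convexity of $E$, to rule out any non-trivial convex decomposition of $e^*\delta_z$.

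First I would verify $\|e^*\delta_z\|_{\mathcal Y^*}=1$. The inequality $\le 1$ is immediate from $|e^*(F(z))|\le\|F\|_\infty$. For the lower bound I would pick the unique $e\in E$ with $\|e\|=1$ and $e^*(e)=1$ (unique by strict convexity of $E$), write $z_0=z$, and set
\[
 f(w)\;=\;\frac{(1-|z_0|^2)\,w}{1-\bar z_0\, w}\,e,\qquad F=\Phi(f).
\]
A direct computation gives $f'(w)=(1-|z_0|^2)(1-\bar z_0 w)^{-2}\,e$ and, using the standard identity $(1-|w|^2)(1-|z_0|^2)/|1-\bar z_0 w|^2=1-|\varphi_{z_0}(w)|^2$, where $\varphi_{z_0}$ is the Möbius automorphism of $\triangle$ swapping $0$ and $z_0$, this yields $F(z_0)=e$ and $\|F(w)\|_E=1-|\varphi_{z_0}(w)|^2$. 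Hence $f(0)=0$ and $(1-|w|^2)\|f'(w)\|_E\to 0$ as $|w|\to 1$ (so $f\in\mathcal B_0(\triangle,E)$), $\|F\|_\infty=1$ is attained only at $w=z_0$, and $(e^*\delta_{z_0})(F)=e^*(e)=1$.

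Second I would derive extremality. Suppose $e^*\delta_{z_0}=\tfrac12(\phi_1+\phi_2)$ with $\phi_i\in\mathcal Y_1^*$. Evaluating at the peaking $F$ forces $\phi_i(F)=1$. Extending each $\phi_i$ by Hahn--Banach to a norm-one functional $\psi_i$ on $\mathcal C_0(\triangle,E)$, I would apply the classical measure-theoretic localization at a strong peak point to each scalarization $g\mapsto\psi_i(g\cdot h)$ with $g\in\mathcal C_0(\triangle)$ and $h\in E$, concluding that $\psi_i$ is of the form $e_i^*\delta_{z_0}$ for some $e_i^*\in E^*$. Smoothness of $E$ at $e$---i.e.\ uniqueness of the norming functional for $e$---then forces $e_i^*=e^*$. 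Consequently $\psi_1=\psi_2=e^*\delta_{z_0}$, and restricting to $\mathcal Y$ gives $\phi_1=\phi_2=e^*\delta_z\big|_{\mathcal Y}$, proving extremality.

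The hard part is the second step: passing from the peaking property to the full identification $\psi_i=e^*\delta_{z_0}$ on all of $\mathcal C_0(\triangle,E)$. The scalar piece (total-variation localization at a strong peak point) is standard, while the vector-valued assembly relies crucially on the smoothness hypothesis on $E$ in order to extract a single functional $e^*$ from the family of scalarizations. Once this identification is in place, combined with the Brosowski--Deutsch inclusion already recalled in the text, the lemma follows.
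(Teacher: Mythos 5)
Your proposal follows essentially the same route as the paper's proof: the same M\"obius-type peaking function $f_0(w)=\frac{(1-|z_0|^2)w}{1-\overline{z_0}w}\,e$, a Hahn--Banach extension of the two functionals to $\mathcal{C}_0(\triangle,E)$, localization of the representing measures at the peak point $z_0$, and smoothness of $E$ (uniqueness of the norming functional at $e$) to conclude both functionals equal $e^*\delta_{z_0}$. The only cosmetic difference is that the paper localizes the total variation of the representing vector measure directly, via a compact subset of $\triangle\setminus\{z_0\}$ on which $\|F_0\|<1$, rather than through scalarizations; this makes the ``vector-valued assembly'' step you flag automatic and independent of smoothness, which is then needed only to identify the point mass with $e^*$.
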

\begin{proof}
We refer the reader to Corollary 2.3.6 in \cite{fj}, due to Brosowski and Deutsch,  which states  that $ext(\mathcal{Y}^*_1)\,\subset \, \{ e^*\delta_z : e^*\in ext(E_1), \,\,\mbox{and}\,\,\, z \in \triangle\}.$
We assume that there exists  a functional of the form $e^*\delta_z$, $\varphi_1$ and $\varphi_2$ in $\mathcal{Y}^*_1$, such that
\begin{equation} \label{extreme_points}  e^*\delta_{z}= \frac{\varphi_1+ \varphi_2}{2}.\end{equation}
Since $\mathcal{Y}$ is a closed subspace of $\mathcal{C}_0(\triangle, E)$, the Hahn-Banach Theorem implies the existence  extensions of $\varphi_1$ and $\varphi_2$, to  $\mathcal{C}_0(\triangle, E).$  These functionals are written as
\[ \tilde{\varphi_1}(F) = \int_{\triangle} \, F d \nu^* \,\,\,\, \mbox{and} \,\,\, \tilde{\varphi_2}(F) = \int_{\triangle} \, F d \mu^* ,\]
with  $\nu^*$ and $\mu^*$ representing  regular vector valued Borel measures on $\triangle$ with values on $E^*$.

Given $z_0 \in \triangle$, we consider the  function in $\mathcal{B}_0(\triangle, E)$
\[ f_0(z)= \frac{(1-|z_0|^2)z}{1-\overline{z_0}z}\,e.\]  Furthermore $\sup_{|z|<1}(1-|z|^2)\|f_0'(z)\|=(1-|z_0|^2) \|f_0'(z_0)\|$ and, for all $z\in \triangle\setminus \{z_0\}$,
\[ (1-|z|^2)\|f_0'(z)\|<(1-|z_0|^2)\|f_0'(z_0)\|=1.\]
We apply (\ref{extreme_points}) to the function $F_0(z)= (1-|z|^2) f_0'(z)$  to conclude that $\varphi_1(F_0)=\varphi_2(F_0)=1.$ If $|\nu^*|(\triangle \setminus \{z_0\})>0,$ then there exists a compact subset $K$ of $\triangle \setminus \{z_0\}$ such that $|\nu^*|(K)>0$. Clearly
\[ \sup_{z \in K} \|F_0(z)\|= \sup_{z \in K} (1-|z|^2)\|f_0'(z)\|=\alpha <1.\]
Hence
\begin{align*}
1=\tilde{\varphi_1}(F_0)&=|\int_{\triangle} F_0 d \nu^*| =\left |\int_{\{z_0\}} F_0  d \nu^*+ \int_K F_0  d \nu^*+\int_{(\triangle\setminus \{z_0\})\setminus K} F_0  d \nu^*\right|\\
& \leq |\nu^*|(\{z_0\}) +\alpha |\nu^*|(K) +|\nu^*|((\triangle\setminus \{z_0\})\setminus K) \\
& <|\nu^*|(\triangle) =1.
\end{align*}
This leads to an absurd and shows that $|\nu^*| (\triangle \setminus \{z_0\})=0$ and $\nu^* (\triangle \setminus \{z_0\})=0$. This  also implies that $\nu^*\{z_0\}$ is a norm one functional. A similar reasoning applies to $\mu^*$.
Given $F \in \mathcal{Y}$,  we have
\begin{align*}
e^* \delta_{z_0} (F) = &(1-|z_0|^2) e^* (f'(z_0))=\frac{\tilde{\varphi_1}(F)+\tilde{\varphi_2}(F)}{2} \\
 =& \frac{1}{2} \left( \int_{\{z_0\}} F d\nu^* + \int_{\{z_0\}} F d\mu^*\right)\\
 =& \frac{1}{2} \left[ \nu^*(z_0) (1-|z_0|^2) f'(z_0)+ \mu^* (z_0) (1-|z_0|^2) f'(z_0)\right].
\end{align*}  Therefore
\[ e^* (f'(z_0))= \frac{\nu^*(z_0) (f'(z_0))+\mu^*(z_0) (f'(z_0))}{2}.\] The smoothness of $E_1^*$ and since $e^*$ is an extreme point of $E_1^*$, we have that $\nu^*=\mu^*$ and $\varphi_1=\varphi_2.$ This  completes the proof.
\end{proof}
The next corollary gives a description of the extreme points of $\mathcal{B}_0(\triangle , E)^*_1.$
\begin{corollary} The set of extreme points $ext(\mathcal{B}_0(\triangle , E)^*_1)$ is equal to the set of functionals
$\tau:\mathcal{B}_0(\triangle , E)\rightarrow \mathbb{C}$ of the form $ \tau (f) = e^*(\Phi (f)(z))$, with $ z \in \triangle $ and  $ e^* \in ext(E^*_1).$
\end{corollary}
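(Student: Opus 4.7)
The plan is to deduce the corollary directly from the lemma by transporting extreme points through the isometric embedding $\Phi$. Recall that $\Phi : \mathcal{B}_0(\triangle, E) \to \mathcal{Y}$ is, by construction, a surjective linear isometry onto the closed subspace $\mathcal{Y} \subset \mathcal{C}_0(\triangle, E)$. Consequently its Banach-space adjoint $\Phi^{*} : \mathcal{Y}^{*} \to \mathcal{B}_0(\triangle, E)^{*}$ is itself a surjective linear isometry, and in particular it restricts to an affine bijection between the unit balls $\mathcal{Y}^{*}_1$ and $\mathcal{B}_0(\triangle, E)^{*}_1$.

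Since any surjective affine bijection between convex sets carries extreme points to extreme points, we obtain the bijection
\[ \Phi^{*} : ext(\mathcal{Y}^{*}_1) \longrightarrow ext(\mathcal{B}_0(\triangle, E)^{*}_1). \]
Using the previous lemma, which identifies $ext(\mathcal{Y}^{*}_1)$ with the set of functionals $e^{*}\delta_{z}$ for $z \in \triangle$ and $e^{*} \in ext(E^{*}_1)$, it then suffices to compute the image of such a functional under $\Phi^{*}$. For any $f \in \mathcal{B}_0(\triangle, E)$,
\[ \Phi^{*}(e^{*}\delta_{z})(f) = (e^{*}\delta_{z})(\Phi(f)) = e^{*}\bigl(\Phi(f)(z)\bigr), \]
which is precisely the form of $\tau$ in the statement.

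There is no real obstacle beyond invoking the standard fact that isometric isomorphisms of Banach spaces induce weak-$*$ homeomorphisms of the dual unit balls that preserve extremality; the computation of $\Phi^{*}(e^{*}\delta_{z})$ is immediate from the definition of the adjoint. Thus the only substantive input is the lemma above, and the corollary follows by combining it with this transport principle.
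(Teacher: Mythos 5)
Your proposal is correct and follows essentially the same route as the paper: both transport the extreme points identified in the lemma through the adjoint $\Phi^{*}$ of the isometric embedding and compute $\Phi^{*}(e^{*}\delta_{z})(f)=e^{*}(\Phi(f)(z))$. Your write-up simply makes the extremality-preservation step slightly more explicit than the paper does.
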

\begin{proof}
The isometry $\Phi$ induces  $\Phi^*: \mathcal{Y}^* \rightarrow \mathcal{B}_0(\triangle , E)^*$. This isometry defines a bijection between the corresponding sets of extreme points, consequently we have that $\Phi^* (e^* \delta_z) \in ext(\mathcal{B}_0(\triangle , E)^*_1),$ with $e^* \delta_z\in ext(\mathcal{Y}^*_1).$ This functional is defined as follows:
\[ \Phi^* (e^* \delta_z) (f) =  e^*(\Phi(f)(z)).\]
This completes the proof.
\end{proof}
\begin{remark}\label{l1_sum}
We observe that the function $f \rightarrow (f(0), f-f(0))$  defines  a surjective  isometry from $\mathcal{B}_*(\triangle, \, E)$ onto $E\bigoplus_1 \mathcal{B}_0(\triangle, \, E).$

It is well known (cf. \cite{fj}) that $ext(\mathcal{B}_*(\triangle, \, E)^*_1)= ext(E_1^*\, \bigoplus_{\infty}\, (\mathcal{B}_0(\triangle, \, E)^*_1).$ Therefore $ext(\mathcal{B}_*(\triangle, \, E)^*_1)=\{ (v^*,\tau): \, v^*\in E_1^*, \,\, \tau \in ext(\mathcal{B}_0(\triangle , E)^*_1) \,\,\mbox{and} \,\, (v^*,\tau)(f)= v^*(f(0))+ \tau(f)\}$
\end{remark}
\section{A Characterization of the surjective isometries on $\mathcal{B}_0(\triangle , E)$}
In this section we show that surjective linear isometries on $\mathcal{B}_0(\triangle , E)$ are translations of  weighted composition operators.

We consider a surjective linear isometry $T: \mathcal{B}_0(\triangle, E) \rightarrow \mathcal{B}_0(\triangle, E)$ and define $S: \mathcal{Y} \rightarrow \mathcal{Y}$ such that  $S \circ \Phi= \Phi \circ T.$ Hence $S^* : \mathcal{Y}^* \rightarrow \mathcal{Y}^*$ induces a permutation of $ext (\mathcal{Y}^*_1)$. Therefore, for every $u^* \in E_1^*$ and $z \in \triangle$, there exist $v^* \in E_1$ and $w \in \triangle$ such that
\[ S^* ( u^* \delta_{z} ) = v^* \delta_{w},\]
equivalently we write
\begin{equation} \label{meq}
(1-|z|^2) u^* (Tf)'(z)= (1-|w|^2) v^*(f'(w)), \,\,\, \mbox{for every } f \in \mathcal{B}_0(\triangle, E).
\end{equation}

Conceivably  $v^*$ and $w$  depend on the choice of $u^*$ and $z$, this determines the following  two maps:
$$\begin{array}{rcll}\sigma :& \triangle \times E_1^*& \rightarrow &\triangle  \\ & (z,u^*)& \rightarrow & w \end{array} \,\,\,\, \mbox{and} \,\,\, \begin{array}{rcll}\Gamma :& \triangle \times E_1^*& \rightarrow & E_1^* \\ & (z,u^*)& \rightarrow &  v^*. \end{array}$$
In the next two lemmas, we show that $\sigma$ is independent of the second coordinate and $\Gamma$ is independent of the first.

\begin{lemma} Let $z_0 \in \triangle$ and $u_0^* \in E_1^*.$ Then  $\sigma$ restricted to the set  $\{ (z_0,u^*): u^* \in E_1^*\}$ is constant and it induces a disc automorphism, also denoted by $\sigma$, defined by $\sigma (z)= \sigma (z, u_0^*)$.
\end{lemma}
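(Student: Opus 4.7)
The plan splits into two parts.

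\emph{Part 1: Independence of $u^*$.} I would fix $z_0 \in \triangle$ and pick arbitrary $u_1^*, u_2^* \in E_1^*$, denoting $v_i^* \delta_{w_i} := S^*(u_i^* \delta_{z_0})$. The trivial case $u_1^* + u_2^* = 0$ is handled directly by linearity of $S^*$, giving $w_1 = w_2$. Otherwise, set $c = \|u_1^* + u_2^*\| > 0$; since $E$ smooth implies $E^*$ strictly convex, the normalized functional $(u_1^* + u_2^*)/c$ is an extreme point of $E_1^*$, so by the previous lemma $S^*((u_1^* + u_2^*)\delta_{z_0}) = c\, v_3^* \delta_{w_3}$. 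Linearity of $S^*$ then yields
\[ v_1^* \delta_{w_1} + v_2^* \delta_{w_2} = c\, v_3^* \delta_{w_3} \]
as functionals on $\mathcal{Y}$. To force all three points to coincide, I would test this equality against $F(z) = (1-|z|^2) p(z) e \in \mathcal{Y}$, where $p$ is a polynomial constructed by Lagrange interpolation to vanish at any prescribed subset of $\{w_1, w_2, w_3\}$ and $e \in E$ is chosen so that the relevant pairing with $v_i^*$ or $v_3^*$ is non-zero; such $F$ arises as $\Phi(f)$ with $f(z) = \bigl(\int_0^z p(\zeta)\, d\zeta\bigr) e$, a polynomial in $\mathcal{B}_0(\triangle, E)$. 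Any configuration with $w_1, w_2, w_3$ not all equal allows one side of the equation to be driven to zero while the other remains non-zero, giving a contradiction.

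\emph{Part 2: Disc automorphism.} Write $\sigma(z) := \sigma(z, u_0^*)$. Bijectivity of $\sigma$ on $\triangle$ follows by applying the construction to $T^{-1}$: since $(S^{-1})^* = (S^*)^{-1}$, the analogous map $\sigma'$ satisfies $\sigma \circ \sigma' = \sigma' \circ \sigma = \mathrm{id}_\triangle$. For holomorphy, fix an arbitrary $z_0 \in \triangle$ and choose $e \in E$ with $\Gamma(z_0, u_0^*)(e) \neq 0$, possible because $\Gamma(z_0, u_0^*)$ is norm one. Apply (\ref{meq}) to $f_1(z) = ze$ and $f_2(z) = z^2 e / 2$, both in $\mathcal{B}_0(\triangle, E)$. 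Since $f_1'(\sigma(z)) = e$ and $f_2'(\sigma(z)) = \sigma(z) e$, the right-hand sides are $(1 - |\sigma(z)|^2)\Gamma(z, u_0^*)(e)$ and $\sigma(z)(1 - |\sigma(z)|^2)\Gamma(z, u_0^*)(e)$; dividing one by the other (the common factor $\Gamma(z, u_0^*)(e)$ cancels because it is independent of the choice of $f$) gives
\[ \sigma(z) = \frac{u_0^*\bigl((T f_2)'(z)\bigr)}{u_0^*\bigl((T f_1)'(z)\bigr)} \]
wherever the denominator is non-zero. The denominator is a holomorphic function of $z$, and it is non-zero at $z_0$ by our choice of $e$; hence it is non-zero on some neighborhood of $z_0$, on which $\sigma$ coincides with a ratio of holomorphic functions with non-vanishing denominator, and so is holomorphic. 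Since $z_0$ was arbitrary, $\sigma$ is holomorphic on all of $\triangle$, and being a holomorphic bijection of $\triangle$, it is a disc automorphism.

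The main obstacle is the holomorphy step in Part 2, since $\sigma$ carries no a priori regularity. The ratio trick works precisely because $\Gamma(z, u_0^*)$ depends only on $(z, u_0^*)$ and not on the test function $f$, letting the unknown factor cancel; once that cancellation is in place, elementary complex analysis delivers the rest.
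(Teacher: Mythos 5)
Your proposal is correct, and its first half follows a genuinely different route from the paper's. For the independence of $\sigma(z_0,\cdot)$ from $u^*$, the paper fixes two functionals, builds a norm-one function $f_0$ whose derivative takes prescribed values at the two candidate points $w$ and $w_1$, and uses smoothness of $E$ to force the two functionals to coincide before concluding $w=w_1$; you instead use linearity of $S^*$ applied to $u_1^*\delta_{z_0}+u_2^*\delta_{z_0}=(u_1^*+u_2^*)\delta_{z_0}$, the fact that the normalized sum again yields an extreme point, and Lagrange interpolation with polynomial test functions $f(z)=\bigl(\int_0^z p\bigr)e$ to separate the three candidate points $w_1,w_2,w_3$. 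This avoids the delicate extremal-function construction entirely; the only caveat is that your step ``$E$ smooth implies $E^*$ strictly convex'' needs reflexivity in general, but this is harmless here since the paper operates under the same standing assumption (it treats every $u^*\in E_1^*$ as giving an extreme point $u^*\delta_z$ and later invokes reflexivity of $E$ explicitly). In the second half, your ratio $\sigma(z)=u_0^*\bigl((Tf_2)'(z)\bigr)/u_0^*\bigl((Tf_1)'(z)\bigr)$ is exactly the paper's device with the test functions $zv$ and $z^2v/2$, except that you run it locally: choosing $e$ with $\Gamma(z_0,u_0^*)(e)\neq 0$ and using continuity of the holomorphic denominator near $z_0$ is actually more careful than the paper, which asserts the denominator is nowhere zero for one fixed vector $v$ — a claim not yet justified at that stage, since the independence of $\Gamma$ from $z$ is only established in the following lemma. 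Bijectivity via $T^{-1}$ and the conclusion that a holomorphic bijection of $\triangle$ is an automorphism agree with the paper.
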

\begin{proof}
 We consider two distinct functionals in $E_1^*$, $u^*$ and $u_1^*$, then we write
\begin{equation} \label{2_eq_a}
 (1-|z_0|^2) u^* ((Tf)'(z_0)) = (1-|w|^2) v^*(f'(w))\end{equation}
and
 \begin{equation} \label{2_eq_b}(1-|z_0|^2) u_1^* ((Tf)'(z_0)) = (1-|w_1|^2) v_1^*(f'(w_1)). \end{equation}
If $w \neq w_1$ we choose $f_0 \in \mathcal{B}_0(\triangle, E)$ of norm 1 such that
\[ f_0'(w)=\frac{1-|z_0|^2}{1-|w|^2} v \,\,\,\mbox{and} \,\,\, f_0'(w_1)=\frac{1-|z_0|^2}{1-|w_1|^2} v_1.\]

The equations displayed in (\ref{2_eq_a}) and (\ref{2_eq_b}) applied to $f_0$ imply that $$u^*[(Tf_0)'(z_0)]=u_1^* [(Tf_0)'(z_0)]=1.$$
Again the smoothness of $E_1^*$ implies that  $u^*=u_1^*$.  Therefore
\begin{equation} \label{simp_eq}  (1-|w|^2)v^* (f'(w))= (1-|w_1|^2)v_1^*(f'(w_1)), \,\,\, \mbox{for every} \, \, f\in \mathcal{B}_0(\triangle, E).\end{equation}
Since $w \neq w_1$, we select $f$ such that $f'(w)=v$ and $f'(w_1)=v_1$, and (\ref{simp_eq}) applied to this new function implies that $|w|=|w_1|.$ Thus
\[v^* (f'(w))= v_1^*(f'(w_1)), \,\,\, \mbox{for every} \, \, f\in \mathcal{B}_0(\triangle, E).\]

This implies that $v=v_1$. If $w \neq w_1$  we can select a function in $\mathcal{B}_0(\triangle, E)$ such that $f'(w)=v$ and $v_1^*(f'(w_1))\neq 1$, which leads to an absurd. This shows that $w=w_1$ and $\sigma$ only depends on the value of $w$. Therefore $\sigma$ only depends on the value of the first coordinate so it induces a map ( also denoted by $\sigma$) on the open disc. Since $T$ is a surjective isometry the same reasoning applied to the inverse implies that $\sigma$ is  bijective.

We now show that $\sigma$ is analytic. We apply the equation (\ref{meq}) to the functions $f_0(z)= \frac{z^2}{2} v$ and $f_1(z)= zv$ to obtain the following:
\[ (1-|z|^2) u^*[(Tf_0)'(z)]= (1-|\sigma(z)|^2) v^*(f_0'(\sigma(z)))\]
and
\[ (1-|z|^2) u^*[(Tf_1)'(z)]= (1-|\sigma(z)|^2) .\]
For every $z \in \triangle, $ we have $u^*[(Tf_1)'(z)]\neq 0.$ Therefore
\[ \sigma(z) = \frac{u^*[(Tf_0)'(z)]}{u^*[(Tf_1)'(z)]}.\] This shows that $\sigma $ is analytic and then a disc automorphism.
\end{proof}

  \begin{lemma}
If $u^* \in E_1^*$, then  $\Gamma$ restricted to the set  $\{ (z,u^*): z \in \triangle \}$ is constant.
  \end{lemma}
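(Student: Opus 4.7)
The plan is to mimic the preceding lemma with the roles of the two coordinates exchanged. I fix $u^*\in E_1^*$, and for contradiction suppose there are $z_1\neq z_2$ in $\triangle$ with $v_1^*:=\Gamma(z_1,u^*)\neq\Gamma(z_2,u^*)=:v_2^*$. Put $w_i=\sigma(z_i)$; by the preceding lemma $\sigma$ is a bijection of $\triangle$, hence $w_1\neq w_2$. The strict convexity of $E$ supplies unique norming unit vectors $\xi_i\in E$ with $v_i^*(\xi_i)=1$; smoothness of $E$ (each unit vector has a unique norming functional) together with $v_1^*\neq v_2^*$ forces $\xi_1\neq\xi_2$.

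The core step is to construct a norm-one function $f_0\in\mathcal{B}_0(\triangle,E)$ satisfying
\[
f_0'(w_i)=\frac{1-|z_i|^2}{1-|w_i|^2}\,\xi_i,\qquad i=1,2,
\]
built as the two-point vector-valued analog of the Blaschke-type test function used in the preceding lemma: I combine scalar Blaschke factors, each attaining its Bloch supremum at one of the $w_i$, with the directions $\xi_i$, tuning the combination so that the overall supremum equals one and is attained at both $w_1$ and $w_2$. Substituting $f_0$ into (\ref{meq}) at $z_i$ and using $v_i^*(\xi_i)=1$ gives $u^*((Tf_0)'(z_i))=1$ for $i=1,2$. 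Combining this with $\|Tf_0\|_{\mathcal B}=1$ and with the fact that, by construction, $(1-|z_i|^2)(Tf_0)'(z_i)$ are unit vectors each normed by $u^*$, smoothness of $E$ identifies them as the unique norming vector of $u^*$; hence $(1-|z_1|^2)(Tf_0)'(z_1)=(1-|z_2|^2)(Tf_0)'(z_2)$. Feeding this equality back into (\ref{meq}) applied to a companion function $f_1$ with $f_1'(w_i)=\frac{1-|z_i|^2}{1-|w_i|^2}\xi_{3-i}$ (roles of $\xi_1,\xi_2$ interchanged at $w_1,w_2$) yields $v_1^*(\xi_2)=v_2^*(\xi_1)$, which via the norming relations $v_i^*(\xi_i)=1$ and uniqueness of the norming functional collapses to $\xi_1=\xi_2$, contradicting $\xi_1\neq\xi_2$.

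The principal obstacle, as in the preceding lemma but more delicate, is the construction of the norm-one test functions $f_0$ and $f_1$: one must prescribe their derivatives at two distinct points while keeping the Bloch supremum equal to one, and further arrange that the supremum of $Tf_0$ is attained at both preimages $z_1$ and $z_2$ so that the unique-norming argument on the image side goes through. This is where the scalar Blaschke-product technology, upgraded to the vector-valued setting by coupling each scalar factor with the norming direction $\xi_i$ and exploiting the isometry of $T$, does the heavy lifting.
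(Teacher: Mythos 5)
Your argument breaks down at its central step, the identification of $(1-|z_i|^2)(Tf_0)'(z_i)$ as ``unit vectors normed by $u^*$.'' All that relation (\ref{meq}) gives you, with $f_0'(w_i)=\frac{1-|z_i|^2}{1-|w_i|^2}\xi_i$, is $u^*\bigl((Tf_0)'(z_i)\bigr)=1$, i.e.\ $u^*$ takes the value $1-|z_i|^2<1$ at the vector $(1-|z_i|^2)(Tf_0)'(z_i)$, whose norm is only pinned between $1-|z_i|^2$ and $1$. So either that vector is not a unit vector, or $u^*$ does not norm it; the two properties you invoke simultaneously are incompatible unless $z_i=0$. Nor can you ``arrange that the supremum of $Tf_0$ is attained at $z_1$ and $z_2$'': $T$ is the unknown operator, and constructing $f_0$ gives you no control over where $Tf_0$ peaks — this is precisely the obstacle you flag, and asserting that Blaschke-product technology does the heavy lifting is not a proof. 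The requested $f_0$ is in fact self-contradictory on its own terms: if $\|f_0\|_{\mathcal B}=1$ with the supremum attained at $w_1,w_2$, then $(1-|w_i|^2)\|f_0'(w_i)\|=1$, whereas your interpolation conditions force this quantity to equal $1-|z_i|^2<1$. Two further steps fail independently: the equality you (hypothetically) obtain for $Tf_0$ transfers no information to $Tf_1$, so (\ref{meq}) applied to the companion function only yields $u^*\bigl((Tf_1)'(z_i)\bigr)=v_i^*(\xi_{3-i})$ with no way to equate the two sides; and even granting $v_1^*(\xi_2)=v_2^*(\xi_1)$, this does not imply $\xi_1=\xi_2$ (in a Hilbert space it merely says $\langle \xi_1,\xi_2\rangle$ is real). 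There is also a small conflation throughout of smoothness with strict convexity: uniqueness of the vector normed by a given functional is a strict-convexity statement.

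The paper's route is structurally different and is worth absorbing: it first rewrites the intertwining identity using $(1-|\sigma(z)|^2)=(1-|z|^2)|\sigma'(z)|$, so that
\[
u^*\bigl[(Tf)'(z)\bigr]=\frac{|\sigma'(z)|}{\sigma'(z)}\,\Gamma(u^*,z)\bigl[(f\circ\sigma)'(z)\bigr],
\]
then argues that the unimodular factor $|\sigma'(z)|/\sigma'(z)$ is analytic in $z$ and hence constant, $e^{i\alpha}$, by the Maximum Modulus Principle. Only after this normalization does a two-point test-function argument compare $\Gamma(u^*,z_1)$ with $\Gamma(u^*,z_2)$ against a single image function $Tf$. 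Your proposal omits this constant-phase step entirely, and without it even a corrected version of your two-point comparison cannot be set up, because the values $\Gamma(u^*,z)\bigl[(f\circ\sigma)'(z)\bigr]$ at different $z$ are a priori twisted by different unimodular factors.
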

  \begin{proof}
  The equation displayed in (\ref{simp_eq}) is rewritten as
  \[ (1-|z|^2) \, u^* [(Tf)'(z)]= (1-|\sigma (z)|^2) \Gamma (u^*,z) [f'(\sigma(z)], \,\,\,\forall \,\, f \in \mathcal{B}_0(\triangle,E) \,\,\mbox{and} \,\, z \in \triangle.\]
   Therefore we get
  \[ u^* [(Tf)'(z)]=\frac{|\sigma'(z)|}{\sigma'(z)} \Gamma (u^*,z) [(f\circ \sigma)'(z)], \,\, \forall \,\,f\in \mathcal{B}_0(\triangle,E).\]
  Equivalently we write
  \[ \frac{u^* [(Tf)'(z)]}{\Gamma (u^*,z) [(f\circ \sigma)'(z)]}=\frac{|\sigma'(z)|}{\sigma'(z)} .\]
  Thus the left hand side is independent of the choice of $u^*$ and $f$.
  Further,  $\frac{|\sigma'(z)|}{\sigma'(z)}$ is analytic on the open disc because $z  \rightarrow \frac{u^* [(Tf)'(z)]}{\Gamma (u^*,z) [(f\circ \sigma)'(z)]}$  is analytic. An application of  the Maximum Modulus Principle asserts that $\frac{|\sigma'(z)|}{\sigma'(z)}$ is constant, i.e. $\frac{|\sigma'(z)|}{\sigma'(z)}=e^{i\alpha},$ for every $z $ in the disc.

  Then \begin{equation} \label{neqn} u^*[(Tf)'(z)]=e^{i\alpha}\, \Gamma  (u^*,z)[(f\circ \sigma)'(z)].\end{equation}

  We assume that there exist $z_1$ and $z_2$, different points in $\triangle$ and $u^* \in E_1^*$ such that $\Gamma (u^*, z_1)\neq \Gamma (u^*, z_2).$ We consider $f \in \mathcal{B}_0(\triangle, E)$ such that $(f\circ \sigma)'(z_1) =e^{-i\alpha}v_1$ and $(f\circ \sigma)'(z_2) =e^{-i\alpha}v_2,$ with $v_1$ and $v_2$ vectors of norm 1 such that
  \[ \Gamma (u^*, z_1)v_1 = \Gamma (u^*, z_2)v_2=1.\] Substituting  this function in (\ref{neqn}) we obtain $u^* ((Tf)'(z_1))=1$ and $u^*  ((Tf)'(z_2))=1,$  then $(Tf)'(z_1)= (Tf)'(z_2).$

   Therefore
  \[
   e^{i \alpha} \Gamma (u^*,z_1) [(f\circ \sigma)'(z_1)]= e^{i \alpha}\Gamma (u^*,z_2) [(f\circ \sigma)'(z_2)],
  \]
  and $ \Gamma (u^*,z_2) [(f\circ \sigma)'(z_2)]=\Gamma (u^*,z_1) [(f\circ \sigma)'(z_1)], \, \, \forall \, f \in \mathcal{B}_0(\triangle,E).$ This leads to a contradiction and shows that no such pair of points exists. Thus $\Gamma$ only depends  on $u^*$, $\Gamma: E_1^* \rightarrow E_1^*$ and $v^*=\Gamma (u^*)$.
  \end{proof}

  \begin{remark}
  The previous lemma implies that $\Gamma$ induces a mapping from $E^*_1$ onto $E^*_1$, for simplicity also denoted by $\,\Gamma$.
  \end{remark}
  We  collect some useful  properties of $\Gamma$. First $\Gamma (\lambda u^*)=\lambda \Gamma (u^*)$, with $\lambda$  a modulus 1 complex number. Then, for every scalar $\lambda,$ we set $\Gamma (\lambda u^*)=\lambda \Gamma (u^*).$
  In particular,  \[\Gamma \left(\frac{u_1^*+u_2^*}{\|u_1^*+u_2^*\|}\right)=\frac{1}{\|u_1^*+u_2^*\|} \left( \Gamma (u_1^*) +\Gamma (u_2^*) \right).\] Hence, we  extend $\Gamma$ to a linear map $\Gamma : E^* \rightarrow E^*$.
We notice that given two distinct functionals $u_1^*$ and $u_2^*$ we set $\Gamma \left( \frac{u_1^*-u_2^*}{\|u_1^*-u_2^*\|} \right) = w^*$. Therefore $\Gamma (u_1^*) -\Gamma (u_2^*) = \|u_1^*-u_2^*\| w^*$ and
\[ \|\Gamma (u_1^*) -\Gamma (u_2^*)\|\leq \|u_1^*-u_2^*\| .\]
As in \cite{bo_ja_zh} (see pg. 60) we employ the following result due to G. Ding.
\begin{theorem} (see \cite{li_li})\label{li_li} Let $E$ and $F$ be two real Banach spaces. Suppose $V_0$ is a Lipschitz mapping from $E_1$ into $F_1$ (the respective unit spheres) with Lipschitz constant equal to 1, that is $\|V_0(x)-V_0(y)\|\leq \|x-y\|$, for every $x,y$ in $E_1$. Assume also that $V_0$ is a surjective mapping such that for any $x,y \in E_1$ and $r >0$, we have
\[   \|V_0(x)-rV_0(y)\| \wedge \|V_0(x) + r V_0(-y)\|\leq \|x-ry\|\] and $\|V_0(x)-V_0(-x)\|=2.$ Then $V_0$ can be extended to be a real linear isometry from $E$ onto $F$.
\end{theorem}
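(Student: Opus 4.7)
The plan is to construct a canonical extension $V: E \to F$ of $V_0$ to the whole space and then invoke the Mazur--Ulam theorem on surjective isometries fixing the origin. Set $V(0) = 0$ and, for $x \neq 0$, define $V(x) = \|x\| V_0(x/\|x\|)$. By construction $V$ is positively homogeneous, norm-preserving, and surjective, so the game becomes showing that $V$ is isometric and odd.

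The first substantive step is to establish oddness: $V(-x) = -V(x)$, equivalently $V_0(-u) = -V_0(u)$ for every $u \in E_1$. The hypothesis $\|V_0(u) - V_0(-u)\| = 2$ alone is not enough in a non-strictly convex codomain (e.g.\ in $\ell_\infty^2$ one can have antipodal unit vectors of distance $2$ that are not negatives of each other), but combined with the disjunctive inequality $\|V_0(x) - r V_0(y)\| \wedge \|V_0(x) + r V_0(-y)\| \leq \|x - r y\|$, applied to suitably chosen pairs on the sphere approaching $u$ and $-u$, the extremal value of $2$ should force the cancellation $V_0(-u) = -V_0(u)$. I expect this to be the main obstacle, since the disjunctive hypothesis was evidently crafted precisely to compensate for the lack of strict convexity of $F$.

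Next I would promote the sphere-Lipschitz property to a global isometric bound on $V$. For nonzero $x, y \in E$, write $x = s u$ and $y = t v$ with $u, v \in E_1$ and $s, t > 0$, and use positive homogeneity together with the disjunctive inequality to estimate $\|V(x) - V(y)\|$ in terms of $\|x - y\|$. The oddness established above collapses the min, selecting the first branch, and delivers $\|V(x) - V(y)\| \leq \|x - y\|$. Applying the same reasoning to $V^{-1}$, which inherits the hypotheses from the surjectivity of $V_0$, gives the reverse inequality, so $V$ is a genuine isometry on all of $E$.

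Finally, since $V$ is a surjective isometry with $V(0) = 0$ between real Banach spaces, the classical Mazur--Ulam theorem forces $V$ to be real-affine, and the basepoint normalization promotes this to real-linearity. Hence $V$ is the desired real-linear isometric extension of $V_0$. The technical heart of the argument, and the likely source of the intricate case analysis in Ding's original proof, is the interplay between the maximal-distance condition on antipodes and the disjunctive Lipschitz condition used to exclude the ``wrong'' alternative.
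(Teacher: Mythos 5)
First, a point of reference: the paper does not prove this statement at all. It is quoted verbatim as an external result of G.~Ding (via the survey of Li and Liu, \cite{li_li}) and used as a black box to extend the map $\Gamma$ to a linear isometry of $E^*$. So there is no internal proof to compare your proposal against; it has to stand on its own as a proof of Ding's theorem, and as written it does not.

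The two places where the theorem is actually hard are precisely the two places you leave as expectations rather than arguments. (1) Oddness: you say the condition $\|V_0(x)-V_0(-x)\|=2$ together with the disjunctive inequality ``should force'' $V_0(-u)=-V_0(u)$, but you give no mechanism. This is the heart of the matter -- it is exactly the Tingley-type difficulty that makes sphere-extension theorems nontrivial -- and your own observation that antipodal distance $2$ does not imply antipodality in a non-strictly-convex $F$ shows you are aware the gap is real; naming the hypothesis that was ``crafted to compensate'' is not the same as using it. (2) The lower bound: once oddness is granted, positive homogeneity and the collapsed disjunction do give $\|V(x)-V(y)\|\leq\|x-y\|$, but your route to the reverse inequality, ``apply the same reasoning to $V^{-1}$, which inherits the hypotheses,'' is unjustified and essentially circular. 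Injectivity of $V_0$ is not among the hypotheses, and even granting bijectivity, there is no reason $V_0^{-1}$ is $1$-Lipschitz or satisfies the disjunctive inequality -- proving that $V_0^{-1}$ is non-expansive is equivalent to the lower bound $\|V_0(x)-V_0(y)\|\geq\|x-y\|$ you are trying to establish. A non-expansive surjection between Banach spaces need not be an isometry, so surjectivity alone cannot rescue this step. The final Mazur--Ulam reduction (a surjective isometry fixing $0$ is real-linear) is fine, but it is the routine part; as a proof of the cited theorem the proposal is an outline with its two load-bearing steps missing, and within the context of this paper the correct move is simply to cite Ding's result, as the authors do.
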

Since $\Gamma$ satisfies the conditions set in the Theorem \ref{li_li}, this assures the existence  of a surjective real linear isometry from $E^* \rightarrow E^*$ that extends $\Gamma$. For simplicity of notation, we  denote this extension also by $\Gamma$. We observe that the  complex linearity of $T$ implies that of $\Gamma$. Since $E$ is reflexive then the adjoint of $\Gamma$ induces a surjective linear isometry on $E$, we call this isometry $S$, therefore we have
\[ u^* (Tf)'(z)= u^* \left( S (f\circ\sigma)' (z) \right),\]
for every $u^* \in E^*$, $f \in \mathcal{B}_0(\triangle, E)$ and $z \in \triangle.$
This implies that $(Tf)'(z)=  S (f\circ\sigma)' (z). $ A straightforward integration yields
\[ Tf(z)= S[ (f\circ\sigma)(z)-(f\circ\sigma)(0)], \,\, \forall f\in \mathcal{B}_0(\triangle, E)\,,\,\mbox{ and } z \in \triangle.\]

We  summarize these considerations  in the following theorem.
\begin{theorem} \label{mt} Let $E$ be a smooth and strictly convex complex Banach space. Then $T: \mathcal{B}_0(\triangle, E) \rightarrow \mathcal{B}_0(\triangle, E)$ is a surjective linear isometry if and only if there exist a surjective linear isometry $S:E \rightarrow E$ and  a disc automorphism $\sigma $ such that for every $f \in  \mathcal{B}_0(\triangle, E) $ and $z \in \triangle,$
\[ Tf(z)= S[ (f\circ\sigma)(z)-(f\circ\sigma)(0)].\]
\end{theorem}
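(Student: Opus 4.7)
The plan is to handle the two directions separately, with the forward direction essentially being a reorganization of the preceding material and the reverse direction being a short verification.

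For the forward implication, I would start from the given surjective isometry $T$ and transport it to $\mathcal{Y}$ via the embedding $\Phi$, defining $S = \Phi \circ T \circ \Phi^{-1}$ on $\mathcal{Y}$. Then $S^{*}$ permutes $\mathrm{ext}(\mathcal{Y}_{1}^{*})$, which by the lemma has the explicit form $\{e^{*}\delta_{z}\}$. This yields the two auxiliary maps $\sigma$ and $\Gamma$. I would then invoke the two preceding lemmas to conclude that $\sigma$ depends only on $z$ and is a disc automorphism, while $\Gamma$ depends only on $u^{*}$ and acts on $E_{1}^{*}$. Combined with the identity
\[
u^{*}[(Tf)'(z)] = e^{i\alpha}\,\Gamma(u^{*})\bigl[(f\circ\sigma)'(z)\bigr]
\]
derived from the Maximum Modulus step, this reduces the structural problem to extending the map $\Gamma$ from the sphere $E_{1}^{*}$ to a real linear isometry of $E^{*}$.

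The key technical step I would execute next is verifying that $\Gamma$ satisfies the hypotheses of Ding's theorem (Theorem \ref{li_li}): it is surjective (because $T$ is), its restriction to the unit sphere is 1-Lipschitz (as already noted), and the antipodal and $r$-scaled conditions. Once Ding's theorem gives a real linear isometric extension, I would absorb the unimodular constant $e^{i\alpha}$ (or rescale $\Gamma$) so that complex linearity of $T$ transfers to complex linearity of $\Gamma$, using the density of finite sums $\sum \lambda_{j} u_{j}^{*}$. Reflexivity of $E$ (which follows from smoothness plus strict convexity being assumed on both $E$ and $E^{*}$, or more carefully, from interpreting $\Gamma^{*}$ back on $E$) then yields a surjective complex linear isometry $S \colon E \to E$ with $\Gamma = S^{*}$. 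This step is where I expect the main obstacle to lie: confirming that Ding's hypotheses are fully met and that the extension remains complex linear rather than merely real linear.

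With $S$ in hand, the identity $u^{*}[(Tf)'(z)] = u^{*}[S(f\circ\sigma)'(z)]$ holds for all $u^{*} \in E^{*}$, so by Hahn--Banach $(Tf)'(z) = S(f\circ\sigma)'(z)$ pointwise. Integrating along a path from $0$ to $z$ and using $Tf(0) = 0$ (since $Tf \in \mathcal{B}_{0}(\triangle,E)$) yields
\[
Tf(z) = S\bigl[(f\circ\sigma)(z) - (f\circ\sigma)(0)\bigr],
\]
completing the forward direction.

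For the reverse implication, I would take $S$ a surjective linear isometry on $E$ and $\sigma$ a disc automorphism, and check directly that the operator $Tf(z) = S[(f\circ\sigma)(z) - (f\circ\sigma)(0)]$ maps $\mathcal{B}_{0}(\triangle,E)$ into itself, is linear and surjective (with inverse given by $S^{-1}$ and $\sigma^{-1}$), and preserves the norm. The norm calculation reduces to
\[
(1-|z|^{2})\|(Tf)'(z)\|_{E} = (1-|z|^{2})|\sigma'(z)|\,\|f'(\sigma(z))\|_{E} = (1-|\sigma(z)|^{2})\|f'(\sigma(z))\|_{E},
\]
using that $S$ is an isometry and the standard Schwarz--Pick identity $(1-|z|^{2})|\sigma'(z)| = 1-|\sigma(z)|^{2}$ for disc automorphisms; together with $Tf(0)=0$ and the bijectivity of $\sigma$, this gives $\|Tf\|_{\mathcal{B}} = \|f\|_{\mathcal{B}}$. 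The membership in $\mathcal{B}_{0}$ follows because $|\sigma(z)| \to 1$ as $|z| \to 1$, so the vanishing of $(1-|\zeta|^{2})\|f'(\zeta)\|$ at the boundary transfers.
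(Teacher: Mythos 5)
Your proposal follows essentially the same route as the paper: transporting $T$ to $\mathcal{Y}$ via $\Phi$, letting the adjoint act on the extreme points $e^{*}\delta_{z}$, invoking the two lemmas on $\sigma$ and $\Gamma$, extending $\Gamma$ by Ding's theorem to get $S$ with $(Tf)'(z)=S(f\circ\sigma)'(z)$ and integrating, and proving the converse by the Schwarz--Pick identity $(1-|z|^{2})|\sigma'(z)|=1-|\sigma(z)|^{2}$. The only difference is that you explicitly flag the verification of Ding's hypotheses, the passage from real to complex linearity, and the use of reflexivity as points needing care, which the paper asserts with little detail.
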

\begin{proof}
The necessity follows from previous considerations. We now show the sufficiency, i.e. any mapping of the form described in the theorem is indeed a surjective isometry. Such an  operator  is bijective, with inverse  $T^{-1} f (z)= S^{-1} \left[f(\sigma^{-1}(z))-f(\sigma^{-1}(0))\right].$ We now show that $Tf(x)= S[ (f\circ\sigma)(x)-(f\circ\sigma)(0)],$ with $\sigma$ a disc automorphism and $S$ a surjective isometry on $E,$ is an isometry. We have
\begin{align*} \|Tf\|_{\mathcal{B}_0(\triangle, E)}&= sup_{z \in \triangle} (1-|z|^2) \|\sigma'(z) S(f'(\sigma(z)))\|\\ &=sup_{z \in \triangle} (1-|z|^2)|\sigma'(z)|\|f'(\sigma(z))\|. \end{align*}
We set   $w=\sigma (z),$ then if $\sigma (z)= \lambda \frac{z-a}{1- \overline{a}z}$ we have $\sigma^{-1}(w)= \frac{\lambda a+w}{\lambda +\overline{a}w}$. Therefore
\begin{align*} (1-|z|^2) |\sigma'(z)| &= \frac{(1-|a|^2)}{\left| 1- \overline{a}\frac{w+\lambda a}{\lambda +\overline{a}w}\right|^2} \left(1-\left| \frac{w+\lambda a}{\lambda +\overline{a}w}\right|^2\right)\\
& \\
&= (1-|w|^2). \end{align*}
This implies that $\|Tf\|_{\mathcal{B}_0(\triangle, E)}=\|f\|_{\mathcal{B}_0(\triangle, E)}$ and completes the proof.
\end{proof}

\section{Hermitian operators }
In this section we use the form of the surjective isometries to derive information about  the hermitian operators on $\mathcal{B}_0(\triangle, E)$. An operator $A$ is hermitian if and only if $i A$ is the generator of  a strongly continuous one-parameter group of surjective isometries, see \cite{en}. We recall that bounded hermitian operators give rise to uniformly continuous one-parameter groups of surjective isometries.

We consider a family of one-parameter group of surjective isometries on ${\mathcal{B}_0(\triangle, E)}$,  Theorem \ref{mt} implies that each isometry determines both a disc automorphism  and a  surjective isometry on $E.$
The next proposition states that the group properties of the underlying group of  isometries transfer to the defining families.
\begin{proposition}\label{form_of_s_i} Let $E$ be a smooth and strictly convex complex Banach space, then
 $\{T_t\}_{t\in \mathbb{R}}$ is a one parameter group of surjective isometries on $\mathcal{B}_0(\triangle, E)$ if and only if there exist a one parameter group of disc automorphisms $\{\sigma_t\}_{t\in \mathbb{R}}$ and one parameter group of surjective isometries on $E$, $\{S_t\}_{t\in \mathbb{R}}$ such that
 \[ T_t (f)(z)= S_t [ f(\sigma_t(z))-f(\sigma_t(0))], \,\,\, \forall \,\, f \in \mathcal{B}_0(\triangle, E).\]
\end{proposition}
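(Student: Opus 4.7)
My plan is to treat sufficiency and necessity separately, with the technical core being a uniqueness statement for the decomposition produced by Theorem \ref{mt}.

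Sufficiency is a direct calculation. Given one-parameter groups $\{\sigma_t\}$ of disc automorphisms and $\{S_t\}$ of surjective isometries on $E$, the map $T_t$ defined by the prescribed formula is a surjective isometry by Theorem \ref{mt}. Since $\sigma_0 = \mathrm{id}_\triangle$ and $S_0 = I_E$, one has $T_0 f(z) = f(z) - f(0) = f(z)$ for $f \in \mathcal{B}_0(\triangle, E)$. To check $T_{t+s} = T_t T_s$, I substitute: applying $T_s$ first and then $T_t$ gives
\[ (T_t T_s f)(z) = S_t\bigl[(T_s f)(\sigma_t(z)) - (T_s f)(\sigma_t(0))\bigr] = S_t S_s\bigl[f(\sigma_s \circ \sigma_t(z)) - f(\sigma_s \circ \sigma_t(0))\bigr], \]
where the terms $-S_s f(\sigma_s(0))$ cancel. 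The group identities $S_t S_s = S_{t+s}$ and $\sigma_s \circ \sigma_t = \sigma_{t+s}$ (equivalent to $\sigma_t \circ \sigma_s = \sigma_{t+s}$ by commutativity of $\mathbb{R}$) then finish this direction.

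For the necessity I first establish the key uniqueness fact: if $Tf(z) = S[f(\sigma(z)) - f(\sigma(0))]$ on $\mathcal{B}_0(\triangle, E)$, then the pair $(\sigma, S)$ is uniquely determined by $T$. This essentially extracts the argument already used inside the proof of Theorem \ref{mt}. Fix a nonzero $v \in E$ and evaluate on the test functions $f_1(z) = zv$ and $f_2(z) = z^2 v / 2$. Differentiating yields $(Tf_1)'(z) = \sigma'(z) Sv$ and $(Tf_2)'(z) = \sigma(z)\sigma'(z) Sv$, both scalar multiples of $Sv \neq 0$. Choosing any $e^* \in E^*$ with $e^*(Sv) \neq 0$, the formula
\[ \sigma(z) = \frac{e^*\bigl((Tf_2)'(z)\bigr)}{e^*\bigl((Tf_1)'(z)\bigr)} \]
is well-posed (a disc automorphism has nowhere-vanishing derivative) and recovers $\sigma$ from $T$ alone. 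Once $\sigma$ is known, $Sv = (Tf_1)'(0)/\sigma'(0)$ determines $Sv$ for each $v$, so $S$ is also uniquely determined.

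Granted uniqueness, the rest is formal. Applying Theorem \ref{mt} to each $T_t$ produces data $(\sigma_t, S_t)$; from $T_0 = I$ and uniqueness one gets $\sigma_0 = \mathrm{id}_\triangle$ and $S_0 = I_E$. Running the sufficiency computation on the representations of $T_t$ and $T_s$ exhibits $T_t T_s$ with data $(\sigma_s \circ \sigma_t,\ S_t S_s)$, while $T_{t+s}$ has data $(\sigma_{t+s}, S_{t+s})$; uniqueness forces $\sigma_{t+s} = \sigma_s \circ \sigma_t$ and $S_{t+s} = S_t S_s$. Interchanging $t$ and $s$ shows the $\sigma_t$ commute, so both families are genuine one-parameter groups. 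The only nontrivial step in the whole argument is the uniqueness of the decomposition; everything else is algebraic bookkeeping on top of Theorem \ref{mt}.
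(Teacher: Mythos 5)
Your proof is correct and is essentially the paper's argument: both rest on Theorem \ref{mt} applied to each $T_t$ together with the test functions $f(z)=zv$ and $f(z)=z^2v$ to identify the symbols, and then read off the group laws from $T_0=I$ and $T_{t+s}=T_tT_s$. The only difference is organizational: you isolate the uniqueness of the pair $(\sigma,S)$ as a separate lemma (recovering $\sigma$ from the quotient of derivatives and then $S$ from $\sigma$), whereas the paper carries out the same comparison inline for the two group identities.
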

\begin{proof}
Let  $\{T_t\}_{t\in \mathbb{R}}$ be a one parameter group of surjective isometries on $\mathcal{B}_0(\triangle, E).$ If $T_0=Id$ we have
\[ S_0[f\circ \sigma_0 - f(\sigma_0(0))]=f , \,\,\, \forall \,\, f \in \mathcal{B}_0(\triangle, E).\] For $f_1 (z)=z v$ and $f_2(z)= z^2 v,$ with $v$ a unit vector in $E$, we obtain
\begin{align*}
[\sigma_0(z)-\sigma_0(0)]\, S_0(v)&=z v\\
[\sigma_0(z)^2-\sigma_0(0)^2]\, S_0(v)&=z^2 v.
\end{align*}
This implies that $ [\sigma_0(z)+\sigma_0(0)]\ \, z v=z^2 v $ and $\sigma_0(z)+\sigma_0(0)=z,$ for every $z \in \triangle$. If $z=0$ then $\sigma_0 (0)=0$  and $\sigma_0 (z)=z.$
Given  $t$ and $s$ in $\mathbb{R}$, we have  $T_{t+s}(f)=T_t[T_s(f)]$, then
\begin{align*}
T_t[T_s(f)]&= S_t [T_s(f)\circ \sigma_t - T_s(f) (\sigma_t(0))]\\
&= S_t \{ S_s [f(\sigma_s\circ \sigma_t) - f (\sigma_s (0))]-S_s [ f(\sigma_s\circ \sigma_t)(0)-f(\sigma_s(0))]\}\\
&= S_t S_s (f(\sigma_s\circ \sigma_t)-f(\sigma_s(\sigma_t(0)))).
\end{align*}
On the other hand, $T_{t+s}(f)=S_{t+s}[f\circ \sigma_{t+s}- f(\sigma_{t+s}(0))].$ Hence
\[(*) \,\,\, S_{t+s}[f\circ \sigma_{t+s}- f(\sigma_{t+s}(0))]= S_t S_s (f(\sigma_s\circ \sigma_t)-f(\sigma_s(\sigma_t(0)))), \,\,\, \forall \, f\in \mathcal{B}_0(\triangle, E).\]
In particular, for $f_1$ and $f_2$  defined above, we have
\begin{align*}
[S_tS_s v] [(\sigma_s\circ \sigma_t)(z)-(\sigma_s\circ \sigma_t)(0)]&=S_{t+s}v [\sigma_{s+t}(z)-\sigma_{t+s}(0)]\\
[S_tS_s v] [(\sigma_s\circ \sigma_t)(z)^2-(\sigma_s\circ \sigma_t)(0)^2]&=S_{t+s}v [\sigma_{s+t}(z)^2-\sigma_{t+s}(0)^2].
\end{align*}
Therefore \[[(\sigma_s\circ \sigma_t)(z)+(\sigma_s\circ \sigma_t)(0)][\sigma_{s+t}(z)-\sigma_{t+s}(0)]=\sigma_{s+t}(z)^2-\sigma_{t+s}(0)^2.\]
This implies
\[(\sigma_s\circ \sigma_t)(z)+(\sigma_s\circ \sigma_t)(0)=\sigma_{s+t}(z)+\sigma_{t+s}(0), \,\,\, \forall z \in \triangle.\] For $z=0$ we have $(\sigma_s\circ \sigma_t)(0)=\sigma_{t+s}(0).$ Then $\sigma_s\circ \sigma_t=\sigma_{s+t}$ and  from  (*) we conclude that $S_t S_s=S_{t+s}.$  The converse implication follows from straightforward calculations. This concludes the proof.
\end{proof}
The next result addresses the question of whether the strong  continuity of a one-parameter group of surjective isometries $\{T_t\}_{t \in \mathbb{R}}$  also transfers to the defining symbols.
\begin{proposition} \label{one_para_se_gr} Let $E$ be a smooth and strictly convex complex Banach space.
If  $\{T_t\}_{t \in \mathbb{R}}$ is a strongly continuous one parameter group of surjective isometries on $ \mathcal{B}_0(\triangle, E)$, then there there exist a strongly continuous one parameter group of surjective isometries on $E$, $\{S_t\}_{t\in \mathbb{R}}$ and a continuous one parameter group of disc  automorphisms $\{\sigma_t\}_{t\in \mathbb{R}}$ such that
\[ T_t (f) (z)= S_t ( f(\sigma_t (z)) - f(\sigma_t(0))), \,\,\,\forall f \in \mathcal{B}_0(\triangle, E) \,\,\, \forall z \in \triangle.\]
\end{proposition}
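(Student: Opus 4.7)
The plan is to exploit the decomposition $T_t(f)(z) = S_t[f(\sigma_t(z)) - f(\sigma_t(0))]$ furnished by Proposition~\ref{form_of_s_i} and to transfer the strong continuity of $\{T_t\}$ to the symbol families $\{\sigma_t\}$ and $\{S_t\}$. Since both are already one-parameter groups with $\sigma_0 = \mathrm{Id}$ and $S_0 = \mathrm{Id}$ (as shown inside the proof of Proposition~\ref{form_of_s_i}), it suffices to establish the desired continuity at $t=0$: the identities $\sigma_{s+t} = \sigma_s\circ\sigma_t$ and $S_{s+t} = S_sS_t$ then propagate continuity to every $t \in \mathbb{R}$.

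The key step uses two test functions. Fix a unit vector $v \in E$ and a norming functional $v^* \in E^*$ with $v^*(v) = \|v^*\| = 1$, and set $f_1(z) = zv$ and $f_2(z) = z^2 v$. From the formula of Proposition~\ref{form_of_s_i} one has $(T_tf_1)'(z) = \sigma_t'(z)\,S_t(v)$ and $(T_tf_2)'(z) = 2\sigma_t(z)\sigma_t'(z)\,S_t(v)$, so strong continuity of $\{T_t\}$ at $f_1$ and $f_2$ yields
\[ \sup_{z \in \triangle}(1-|z|^2)\bigl\|\sigma_t'(z) S_t(v) - v\bigr\| \to 0, \]
\[ \sup_{z \in \triangle}(1-|z|^2)\bigl\|2\sigma_t(z)\sigma_t'(z) S_t(v) - 2zv\bigr\| \to 0, \]
as $t \to 0$. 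On any compact $K \subset \triangle$ the weight $(1-|z|^2)$ is bounded below, so both limits hold uniformly on $K$ in the unweighted norm. Setting $P_t(z) := \sigma_t'(z)\,v^*(S_t(v))$ and applying $v^*$, I obtain $P_t(z) \to 1$ and $\sigma_t(z) P_t(z) \to z$ uniformly on $K$; since $P_t$ is eventually bounded away from zero on $K$, this forces $\sigma_t \to \mathrm{Id}$ uniformly on compacta. Cauchy estimates for analytic functions then promote this to $\sigma_t' \to 1$ uniformly on slightly smaller compacta, and substituting back into the first limit gives $S_t(v) \to v$ in $E$ for every unit vector $v$. Combined with $\|S_t\| = 1$, this is exactly strong continuity of $\{S_t\}$ at $0$, together with continuity of $\{\sigma_t\}$ in the natural (uniform-on-compacta) topology on disc automorphisms.

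The step I expect to be most delicate is the passage from the weighted Bloch convergence to unweighted control of $\sigma_t$ and $\sigma_t'$ on compacta, and in particular verifying that $P_t(z)$ stays bounded away from zero so that the identity $\sigma_t(z) = [\sigma_t(z)P_t(z)]/P_t(z)$ can legitimately be used to isolate $\sigma_t$. This is ultimately secured by evaluating the first limit at $z=0$, which gives $\sigma_t'(0)\,v^*(S_t(v)) \to 1$, but deserves to be spelled out carefully. Once it is in place, the group identities reduce continuity everywhere to continuity at $0$, and the proof is complete.
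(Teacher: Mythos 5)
Your argument is correct and proves the proposition, but it takes a noticeably different technical route from the paper, even though both start from Proposition \ref{form_of_s_i} and test strong continuity on monomials $z^k\mathbf{v}$ against a norming functional. The paper works with the \emph{values} $T_tf_i(z)$ at fixed points of the disc (implicitly using that point evaluations are bounded on $\mathcal{B}_0(\triangle,E)$, so Bloch-norm convergence gives pointwise convergence) and needs three test functions $z\mathbf{v}$, $z^2\mathbf{v}$, $z^3\mathbf{v}$: dividing the resulting limits gives $\sigma_t(z_0)+\sigma_t(0)\to z_0$ and $\sigma_t(z_0)^2+\sigma_t(z_0)\sigma_t(0)+\sigma_t(0)^2\to z_0^2$, hence $\sigma_t(z_0)\sigma_t(0)\to 0$, and then a subsequence argument forces $\sigma_t(0)\to 0$ and $\sigma_t(z_0)\to z_0$, after which $S_t\mathbf{v}\to\mathbf{v}$ follows by dividing by the scalar $\sigma_t(z_0)-\sigma_t(0)$. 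You instead stay with the defining seminorm, i.e. with the derivatives $(T_tf_i)'$, which buys you several things: only two test functions; convergence that is automatically uniform on compacta, so the continuity of $\{\sigma_t\}$ you obtain is in the stronger locally uniform sense; no need for the paper's subsequence argument, since the nonvanishing of $P_t$ on a compact $K$ follows at once from $P_t\to 1$ uniformly on $K$ (your separate appeal to the evaluation at $z=0$ is weaker than what you already have and is not needed); and a clean recovery of $\sigma_t'\to 1$ via Cauchy estimates before extracting $S_t\mathbf{v}\to\mathbf{v}$, e.g. from $\|S_t\mathbf{v}-\mathbf{v}\|\le |1-\sigma_t'(0)|\,\|S_t\mathbf{v}\|+\|\sigma_t'(0)S_t\mathbf{v}-\mathbf{v}\|$. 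The only steps to spell out in a full write-up are the standard propagation of continuity from $t=0$ to all $t$ via $\sigma_{s+t}=\sigma_s\circ\sigma_t$ and $S_{s+t}=S_sS_t$, which you correctly flag as routine.
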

\begin{proof}
Proposition \ref{form_of_s_i} implies the existence of one parameter groups   of surjective isometries on $E$ and disc automorphisms, $\{S_t\}$ and $\{\sigma_t\}$ respectively, such that
\[  T_t (f) (z)= S_t ( f(\sigma_t (z)) - f(\sigma_t(0))), \,\,\,\forall f \in \mathcal{B}_0(\triangle, E) \,\,\, \forall z \in \triangle.\]
Since  $\{T_t\}_{t \in \mathbb{R}}$ is strongly continuous, in particular for $f_1(z) = z \mathbf{v}$, $f_2(z)= z^2 \mathbf{v}$ and $f_3(z)= z^3 \mathbf{v}$ ( $\mathbf{v}\in E_1$, $z \in \triangle$ and $i=1,2,$ or $3$) we have
\[ \|[\sigma_t (z)^i -\sigma_t(0)^i ] S_t(\mathbf{v}) - z^i \mathbf{v}\|\rightarrow \, 0  \,\,\,\mbox{as} \,\,\, t \rightarrow 0.\]
Given  $z_0 \neq 0$, and $\varphi \in E_1^*$ such that $\varphi (\mathbf{v})=1$,
\[ \lim_{t \rightarrow 0} [\sigma_t (z_0) -\sigma_t(0)]\varphi (S_t(\mathbf{v})) = z_0 \,\,\,\,\mbox{and} \,\,\, \lim_{t \rightarrow 0} [\sigma_t (z_0)^2 -\sigma_t(0)^2]\varphi (S_t(\mathbf{v})) = z_0^2,\]
implies that \begin{equation} \label{cont1} \lim_{t \rightarrow 0} (\sigma_t (z_0) +\sigma_t(0))=z_0.\end{equation}  Also \[ \lim_{t \rightarrow 0} [\sigma_t (z_0) -\sigma_t(0)]\varphi (S_t(\mathbf{v})) = z_0 \,\,\,\,\mbox{and} \,\,\, \lim_{t \rightarrow 0} [\sigma_t (z_0)^3 -\sigma_t(0)^3]\varphi (S_t(\mathbf{v})) = z_0^3,\]
implies \begin{equation} \label{cont2}\lim_{t \rightarrow 0} (\sigma_t (z_0)^2 + \sigma_t(z_0)\sigma_t(0) +\sigma_t(0)^2)=z_0^2.\end{equation} It follows from (\ref{cont1}) and (\ref{cont2}) that  $\lim_{t \rightarrow 0} \sigma_t(z_0)\sigma_t(0)=0.$ This implies that $\lim_{t \rightarrow 0} \sigma_t(0)=0$, otherwise there exists a sequence $\{t_n\}$ such that $\sigma_{t_n}(0)$ would converges to some complex number $w(\neq 0)$ in the closed disc. Hence, for every $z_0 \neq 0$ $\{ \sigma_{t_n}(z_0)\}_n$ converges to zero and $w=z_0$. This leads to an absurd and proves  that $\lim_{t \rightarrow 0} \sigma_t(0)=0$ and $\lim_{t \rightarrow 0} \sigma_t(z_0)=z_0.$ This establishes the continuity of $\{\sigma_t\}$. For  $z_0 \neq 0$,
\[ \lim_{t \rightarrow 0} \frac{ [\varphi_t(z_0)-\varphi_t(0)] S_t (\mathbf{v})}{ \varphi_t(z_0)-\varphi_t(0)} = \frac{z_0 \mathbf{v}}{z_0}=\mathbf{v},\] which completes the proof.
\end{proof}

\begin{corollary}\label{nc}
Let $E$ be a smooth and strictly convex complex Banach space. If $A$ is a (not necessarily bounded) hermitian operator on $\mathcal{B}_0(\triangle, E)$, then there exist a hermitian operator (not necessarily bounded) $V$ on $E$ and a continuous group of disc automorphisms $\{\sigma_t\}_{t \in \mathbb{R}}$ such that
\[ A(f)(z) = V [f(z)] + [\partial_t \, \sigma_t(z)]_{t=0} f'(z).\]
If $A$ is bounded then $\{\sigma_t\}_{t \in \mathbb{R}}$ is the trivial group and $ A(f)(z) = V [f(z)],$ with $V$ bounded.
\end{corollary}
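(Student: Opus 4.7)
Given a hermitian operator $A$ on $\mathcal{B}_0(\triangle,E)$, by definition $iA$ generates a strongly continuous one parameter group $\{T_t\}_{t\in\mathbb{R}}$ of surjective isometries. Proposition \ref{one_para_se_gr} factorises each member as
\[ T_t(f)(z) = S_t\bigl( f(\sigma_t(z)) - f(\sigma_t(0))\bigr),\]
with $\{S_t\}$ a strongly continuous isometry group on $E$ and $\{\sigma_t\}$ a continuous one parameter subgroup of disc automorphisms. My plan is to recover $A$ by differentiating this identity in $t$ at $t=0$, pointwise in $z$, on a dense subspace of the domain of $A$.

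Applying the chain and product rules to the right hand side, and using that $S_0 = I$, $\sigma_0(z)=z$, and $f(0)=0$, the derivative splits into the contribution of $\partial_t S_t|_{t=0}$ evaluated on $f(z)$, and the contribution of $\partial_t \sigma_t(z)|_{t=0}$ coupled to $f'(z)$ through the chain rule applied to $f\circ\sigma_t$. Because $\{S_t\}$ is itself a strongly continuous group of surjective isometries on $E$, Vidav's theorem identifies its generator with $iV$ for a (possibly unbounded) hermitian $V$ on $E$, and this is precisely the $V$ appearing in the claimed formula
\[ A(f)(z) = V[f(z)] + [\partial_t\sigma_t(z)]_{t=0}\,f'(z).\]
Extending from the polynomial test functions $z^k\mathbf{v}$, $\mathbf{v}\in E$, used in Propositions \ref{form_of_s_i} and \ref{one_para_se_gr} to the full domain of $A$ is then routine by linearity and closedness of the generator.

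For the second assertion, assume $A$ is bounded. Then $\{T_t\}$ is uniformly continuous and the above formula is valid on all of $\mathcal{B}_0(\triangle,E)$. The coefficient $v(z):=[\partial_t\sigma_t(z)]_{t=0}$ is a holomorphic vector field on $\triangle$ arising from a one parameter subgroup of the disc automorphism group, hence of an explicit low degree polynomial form. If $v\not\equiv 0$, the operator $f\mapsto v(z) f'(z)$ is unbounded on $\mathcal{B}_0(\triangle,E)$, as one sees by testing on $f_n(z)=z^n \mathbf{v}$ with $\|\mathbf{v}\|_E=1$: the norms $\|f_n\|_{\mathcal{B}_0}$ remain uniformly bounded while $\|v(z) f_n'(z)\|_{\mathcal{B}_0}$ grows without bound in $n$. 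Hence $v\equiv 0$, which forces $\sigma_t=\mathrm{id}$ for every $t$, and $A(f)=V\circ f$ reduces to pointwise application of $V$. The boundedness of $V$ follows by evaluating $A$ on the unit norm functions $z\mapsto z e$, $e\in E_1$.

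The main technical obstacle will be the rigorous passage between the Banach space limit $\lim_{t\to 0}(T_tf-f)/t$ that defines $Af$ in the $\mathcal{B}_0$ norm and the pointwise-in-$z$ chain rule calculation sketched above, particularly when $A$ is unbounded with a merely dense domain. The remedy is to establish the identity first on the explicit dense core of polynomial test functions used throughout the previous sections, and then to propagate it by linearity and closedness of the generator; the fact that $v(z)=[\partial_t\sigma_t(z)]_{t=0}$ exists and is holomorphic of the required form is guaranteed by the structure of one parameter subgroups of disc automorphisms.
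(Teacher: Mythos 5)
Your proposal is correct and follows essentially the same route as the paper: the corollary is obtained there by noting that $\{e^{-itA}\}_{t\in\mathbb{R}}$ is a strongly continuous group of surjective isometries, invoking Proposition \ref{one_para_se_gr} to write $T_t(f)(z)=S_t\bigl(f(\sigma_t(z))-f(\sigma_t(0))\bigr)$, and differentiating at $t=0$ so that $V$ is the (hermitian) generator of $\{S_t\}$ and the vector-field term comes from $\partial_t\sigma_t|_{t=0}$. Your treatment of the bounded case, testing on $f_n(z)=z^n\mathbf{v}$ with uniformly bounded $\|f_n\|_{\mathcal{B}_0}$ against an unbounded image, is exactly the argument the paper uses in Corollary \ref{bp}.
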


Nontrivial disc automorphisms can be extended to  conformal maps on the plane and as such, they are characterized according to their fixed points. More precisely, they fall into three types: an  elliptic automorphism has a single fixed point in the disc and another one in the interior of  its complement; a hyperbolic automorphism has two distinct fixed points on the boundary of the disc and  a parabolic has a single fixed point on the boundary of the disc, cf. \cite{krantz}.

It has been shown that all disc automorphisms of a nontrivial one-parameter group family of disc automorphisms share the same fixed points, cf. \cite{be_po1}. Thus, we consider the following three cases:
\begin{enumerate}
\item[(i)] Elliptic.\[  \varphi_t (z) = \frac{ ( e^{ict}-|\tau|^2)z - \tau (e^{ict}-1)}{ 1-|\tau|^2 e^{ict} - \bar{\tau} (1- e^{ict})z}, \] with $c \in \mathbb{R}\setminus \{0\}$, $ \tau \in \mathbb{C}$ such that $|\tau|<1.$
    \item[(ii)] Hyperbolic.
    \[ \varphi_t (z) = \frac{(\beta e^{c t }-\alpha)z + \alpha \beta (1- e^{c t})}{ ( e^{c t}-1)z + (\beta - \alpha e^{c t})},\]
    with $c$ a positive real number, $|\alpha|=| \beta|=1$ and $\alpha \neq \beta.$
 \item[(iii)] Parabolic.
    \[ \varphi_t(z) = \frac{(1-ict) z + ict \alpha}{-ic \bar{\alpha} t z +1+ i c t},\] with $c \in R \setminus\{0\}$ and $|\alpha|=1.$
\end{enumerate}

In \cite{be_ka_po}, Berkson, Kaufman and Porta show the existence of an invariant polynomial associated with one parameter group of disc automorphisms \[ \varphi_t(z) = a(t) \frac{z-b(t)}{1- \overline{b(t)} \, z},\] with $|a(t)|=1$ and $|b(t)|<1.$ This polynomial is given by
\[ P(z)= \overline{b'(0)} z^2+ a'(0)z - b'(0).\] It is a straightforward computation to check that
\[ \partial_t\varphi_{t}(z)|_{t=0}= P(z) \,\,\, \mbox{and} \,\,\,\partial_t \varphi'_{t} (z)|_{t=0} =P'(z).\]

The invariant polynomial for each of the three types of nontrivial disc automorphisms is given by:
\begin{enumerate}
\item[(i)] Elliptic. $P(z)= -\frac{ic}{1-|\tau|^2} \left\{ (\overline{\tau} z-1)(z-\tau)\right\}$  ($|\tau|<1$).
 \item[(ii)] Hyperbolic. $P(z)=-\frac{c}{\beta-\alpha} \left\{ z^2 - (\alpha+\beta) z + \alpha \beta\right\}, $ ($|\alpha|=| \beta|=1$ and $\alpha \neq \beta$).
  \item[(iii)] Parabolic. $P(z)=i \, \overline{\alpha}\, c (z-\alpha)^2,$  ($c \in R \setminus\{0\}$ and $|\alpha|=1$).
\end{enumerate}
Since hermitian operators  are generators of strongly continuous one-parameter groups of surjective isometries we  derive a representation for the  $\mathcal{B}_0(\triangle, E)$ setting.
\begin{proposition} \label{herm}
Let $E$ be a smooth and strictly convex complex Banach space. If a  closed  operator $A$ with domain $\mathcal{D}(A)$, a dense subset of  $\mathcal{B}_0(\triangle, E)$ is hermitian then  there exists a closed and densely defined hermitian operator $V$ on $E$ and a nonzero real number $c$, and complex numbers $\tau$, $\alpha$ and $\beta$ such that $|\tau|<1$ and $|\alpha|=| \beta|=1$  and one of the following holds:
\begin{enumerate}
\item $A(f)(z)= V (f(z)),\,\, f \in \mathcal{B}_0(\triangle, E)$ and $z \in \triangle.$
\item $A(f)(z)= V (f(z))  +\frac{c}{1-|\tau|^2} \left\{ (\overline{\tau} z-1)(z-\tau)\right\} f'(z), \,\, f \in \mathcal{D}(A)$ and $z \in \triangle.$
    \item $A(f)(z)= V (f(z)) -i\frac{|c|}{\beta-\alpha} \left\{ z^2 - (\alpha+\beta) z + \alpha \beta\right\}f'(z), \,\, f \in \mathcal{D}(A)$ and $z \in \triangle.$
        \item $A(f)(z)= V (f(z)) - \, \overline{\alpha}\, c (z-\alpha)^2f'(z), \,\, f \in \mathcal{D}(A)$ and $z \in \triangle.$
\end{enumerate}
\end{proposition}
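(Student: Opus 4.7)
The plan is to combine Corollary \ref{nc} with the Berkson--Kaufman--Porta formula for the invariant polynomial and the classification of nontrivial one-parameter groups of disc automorphisms into elliptic, hyperbolic, and parabolic types.

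First, I would apply Corollary \ref{nc} directly to the given closed, densely defined hermitian operator $A$. This supplies a closed, densely defined hermitian operator $V$ on $E$ together with a continuous one-parameter group of disc automorphisms $\{\sigma_t\}_{t\in\mathbb{R}}$ such that, for every $f\in\mathcal{D}(A)$ and every $z\in\triangle$,
\[ A(f)(z) \;=\; V(f(z)) \;+\; P(z)\,f'(z), \qquad P(z):=\partial_t\sigma_t(z)\big|_{t=0}. \]
The problem thus reduces to identifying the admissible polynomials $P$.

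Next I would dichotomize on $\{\sigma_t\}$. If $\{\sigma_t\}$ is the trivial group, then $P\equiv 0$, the formula collapses to $A(f)(z)=V(f(z))$, and the second clause of Corollary \ref{nc} forces both $A$ and $V$ to be bounded; this is case (1). Otherwise, $\{\sigma_t\}$ is nontrivial, and since all members of a nontrivial one-parameter group share the same fixed points (as recalled in the excerpt), the group is of exactly one of the three types: elliptic, hyperbolic, or parabolic, with the formula for $\sigma_t(z)$ already recorded above.

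At this point I would invoke the Berkson--Kaufman--Porta formula $P(z)=\overline{b'(0)}\,z^2+a'(0)\,z-b'(0)$ for the invariant polynomial of $\{\sigma_t\}$ and substitute the parameters $a(t)$, $b(t)$ read off from the three explicit disc automorphisms. The resulting polynomials---already listed in the excerpt as the elliptic, hyperbolic, and parabolic invariants---when fed into $A(f)(z)=V(f(z))+P(z)f'(z)$ produce the expressions in cases (2), (3), and (4) after harmless relabeling of the real parameter $c$.

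The main obstacle is the bookkeeping of constants and factors of $i$. Since $A$ is hermitian precisely when $iA$ generates $\{T_t\}$, matching the coefficient of $f'(z)$ in $A$ against the Berkson--Kaufman--Porta polynomial introduces a factor of $-i$, which must be absorbed into $c$ (and possibly the phases $\tau,\alpha,\beta$); this is why case (3) carries an explicit $-i|c|/(\beta-\alpha)$, whereas in case (2) the constant has been re-scaled to absorb the $i$. Once these normalizations are tracked through each of the three nontrivial cases, the statement follows. No machinery beyond Corollary \ref{nc}, the Berkson--Kaufman--Porta polynomial, and the classification of disc automorphisms is needed.
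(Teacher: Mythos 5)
Your argument follows the paper's own proof: pass to the strongly continuous isometry group generated by $iA$ (equivalently, invoke Corollary \ref{nc}) to get $A(f)(z)=V(f(z))+P(z)f'(z)$ with $P=\partial_t\sigma_t|_{t=0}$, then read off $P$ from the elliptic/hyperbolic/parabolic classification and the Berkson--Kaufman--Porta invariant polynomial, absorbing signs and factors of $i$ into the real constant $c$, exactly as the paper does in the preamble and proof of Proposition \ref{herm}. One small correction: your claim that the trivial-group case forces $A$ and $V$ to be bounded is not right (the paper's own $\ell_2$ example has $\sigma_t=\mathrm{id}$ yet $A$ unbounded), but this remark is not needed, since the trivial group only yields $P\equiv 0$ and hence case (1).
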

\begin{proof}
Given a hermitian operator $A$ satisfying the conditions stated, then $\{e^{-itA}\}_{t \in \mathbb{R}}$ is a strongly continuous one-parameter group of surjective isometries on $\mathcal{B}_0(\triangle, E)$. Theorem \ref{mt} applies to assert the existence of a strongly continuous one-parameter group of surjective isometries on $E$, $\{S_t\}_{t\in \mathbb{R}}$ and a continuous group of disc automorphisms $\{\sigma_t\}_{t \in \mathbb{R}}$ such that
\[ e^{-itA} (f)(z)= S_t (f(\sigma_t(z))-f(\sigma_t(0))), \,\,\, \forall \,\,\, f \in \mathcal{D}(A).\]
We denote by $V$ the generator of $\{S_t\}_{t\in \mathbb{R}}$ then
\[ A (f)(z)= V(f(z)) -i \partial_t(\sigma'_t(z))|_{t=0} \, f'(z), \,\, \forall \,\,\, f \in \mathcal{D}(A).\]  The considerations in the preamble to the proposition justify the three cases listed.

\end{proof}

\begin{remark}
In the scalar case, $\mathcal{B}(\triangle)$ is known be a Grothendieck space with the Dunford Pettis property (see \cite{lo}). As a consequence of this fact Blasco et. al. in \cite{bl} showed that all strongly continuous groups on $\mathcal{B}(\triangle)$ are uniformly continuous. Therefore only the trivial group of disc automorphisms is permissible (i.e. $\{\sigma_t\} =\{ id\}$) and the hermitian operators are just real multiples of the identity. This is in contrast with our case because of the following example. Suppose $E= \ell_2$, $\sigma_t(z)=z$ and set
\[T_t(f)(z)= (e^{it} f_1(z), e^{2it} f_2(z), \ldots ).\] This is a family of strongly continuous  surjective isometries but not uniformly continuous. The generator of this group is given by
\[ Af (z)= (f_1(z), 2f_2(z), 3f_3(z) \ldots )\]
which is clearly an unbounded operator.
\end{remark}
We also have the following characterization for bounded hermitian operators on $\mathcal{B}_0(\triangle, E)$.
\begin{corollary}\label{bp} Let $E$ be a smooth and strictly convex complex Banach space. If $A$ is a bounded hermitian operator on $\mathcal{B}_0(\triangle, E)$ then there exists a bounded hermitian operator $V$ on $E$ such that
\[ A(f)(z)=V(f(z)), \,\,\, \forall \, f \in \mathcal{B}_0(\triangle, E) \,\,\, \mbox{and} \,\,\, z \in \triangle.\]
\end{corollary}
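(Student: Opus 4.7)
The plan is to invoke Proposition~\ref{herm} and eliminate the three cases where the polynomial $P$ is nontrivial. Since $A$ is bounded, its domain is all of $\mathcal{B}_0(\triangle, E)$ and the proposition applies, yielding
\[
A(f)(z) = V(f(z)) + P(z)\, f'(z), \qquad f \in \mathcal{B}_0(\triangle, E),
\]
for some closed densely defined hermitian operator $V$ on $E$ and a polynomial $P$ of degree at most two. For every $v \in E$ and any nonzero $z \in \triangle$, the function $f(w) = (w/z) v$ lies in $\mathcal{B}_0(\triangle, E)$ with $f(z) = v$; for the right-hand side to be defined, one needs $v \in \mathcal{D}(V)$. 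Thus $\mathcal{D}(V) = E$, and the closed graph theorem gives that $V$ is bounded. It only remains to show $P \equiv 0$.

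To do so, I would borrow the unit-norm test functions from the proof of Lemma~2.1: for $a \in \triangle$ and a unit vector $v \in E$, set
\[
f_a(z) = \frac{(1 - |a|^2)\, z}{1 - \bar a\, z}\, v, \qquad \|f_a\|_{\mathcal{B}_0} = 1.
\]
A direct computation gives $f_a'(a) = v/(1-|a|^2)$ and $f_a''(a) = 2\bar a\, v/(1-|a|^2)^2$. Since $V$ is bounded, one has $(Af_a)'(z) = V(f_a'(z)) + P'(z) f_a'(z) + P(z) f_a''(z)$, and multiplying by $(1-|a|^2)$ at $z = a$ yields
\[
(1-|a|^2)(Af_a)'(a) = V(v) + P'(a)\, v + \frac{2 \bar a\, P(a)}{1-|a|^2}\, v.
\]
The first two summands remain bounded as $a$ varies in $\triangle$, while the third diverges whenever $|a| \to 1$ along a sequence on which $|P|$ stays bounded below by a positive constant.

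Each of the three nontrivial polynomials listed in Proposition~\ref{herm} has at most two zeros on $\partial \triangle$: none in the elliptic case, the distinct boundary points $\alpha, \beta$ in the hyperbolic case, and the single point $\alpha$ in the parabolic case. Hence a sequence $a_n \to \zeta \in \partial \triangle$ with $|P(a_n)|$ bounded below always exists, forcing $\|Af_{a_n}\|_{\mathcal{B}_0} \to \infty$. This contradicts boundedness of $A$ (since $\|f_{a_n}\|_{\mathcal{B}_0} = 1$), so only case (1) of Proposition~\ref{herm} survives, which is the statement of the corollary. The main conceptual obstacle is the automatic boundedness of $V$, but the closed graph argument in the first paragraph dispatches it cleanly, and after that the remaining work is a single test-function estimate together with an inspection of the zeros of $P$.
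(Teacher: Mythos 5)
Your proposal is correct and follows essentially the same route as the paper's (very terse) proof: invoke Proposition~\ref{herm} and rule out the nontrivial invariant polynomial by exhibiting a unit-norm family of test functions on which $\|Af\|_{\mathcal{B}_0}$ would blow up, the paper using the monomials $z^n\mathbf{v}$ where you use the M\"obius-type kernels $f_a$. Your closed-graph argument for the automatic boundedness of $V$ fills in a detail the paper leaves implicit, and your boundary-zero analysis of $P$ makes explicit the unboundedness claim the paper only asserts.
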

\begin{proof}
The operator $A$ is of one of the forms listed in the Proposition \ref{herm}, the sequence of functions $f_n(z)=z^n \mathbf{v},$ with $\mathbf{v}$ a unit vector in $E,$ are in $\mathcal{B}_0(\triangle, E)$. Thus the respective sequence of norms is uniformly bounded and $\|Af\|$ is unbounded. This implies that $\sigma'_t(z)|_{t=0}=0$ and $\sigma_t(z)=z.$ This completes the proof.
\end{proof}
\begin{remark}
It is a known fact that Banach spaces with the Grothendieck  property and the Dunford-Pettits property only support bounded hermitian operators, see \cite{lo,bl}. The little Bloch scalar valued space, $\mathcal{B}_0(\triangle )$ has these two properties (cf. \cite{lo}) and thus every hermitian operator on $\mathcal{B}(\triangle )$ is bounded. This implies that if a hermitian operator $A$  on $\mathcal{B}_0(\triangle,E )$ with an eigenspace containing  one dimensional subspace $\{h(z) v: h \in \mathcal{B}(\triangle ), \, v \in E_1\}$ then $A$ is of the form $A(f)(z)=V f(z)$.
\end{remark}

 Corollary \ref{bp} allows us to extend our characterization to surjective isometries of $\mathcal{B}_*(\triangle, E)$.
As pointed out in Remark \ref{l1_sum}, $\mathcal{B}_*(\triangle, E)$ is isometrically isomorphic to the $\ell_1$-sum of $E$ with $\mathcal{B}_0(\triangle, E)$. Moreover, if $E$ does not admit $L_1$-projections (i.e. a  bounded hermitian operator  $P$ on $E$ such that $P^2=P$ and for every $v \in E,$  $\|v\|_E= \|Pv\|_E+ \|(I-P)v\|_E$) then also $\mathcal{B}_0(\triangle, E)$ does not admit $L_1$-projections. In fact, assuming $P$ represents a $L_1$-projection on $\mathcal{B}_0(\triangle, E)$,  Corollary \ref{bp} implies that  $P(f)(z)= V (f(z))$, with $V$ a bounded hermitian projection on $E$. Therefore $P (h \mathbf{v})(z)=h(z) V\mathbf{v}, $ for $h \in \mathcal{B}_0(\triangle)$. In particular for $h(z)=z,$  $\|\mathbf{v}\| = \|V\mathbf{v}\|+ \|(I-V)\mathbf{v}\|$ which implies that $E$ supports  $L_1$-projections.

 We employ Proposition 4.3 in \cite{jj},  a surjective isometry on $\mathcal{B}_*(\triangle, E)$ can be written as a direct sum of a surjective  isometry on $E$ and a surjective isometry on $\mathcal{B}_0(\triangle, E)$. Therefore,  a surjective isometry $T$ on $\mathcal{B}_*(\triangle, E)$ is given by
\[ T(f)(z)= Uf(0)+ S[ (f\circ\sigma)(x)-(f\circ\sigma)(0)],\] with $\sigma$ a disc automorphism, $U$ and  $S$  surjective isometries on $E.$
We summarize these considerations in the next result.
\begin{theorem}\label{form_iso_in*sp}
Let $E$ be a smooth, strictly convex complex Banach space. Then $T:\mathcal{B}_*(\triangle, E) \rightarrow \mathcal{B}_*(\triangle, E)$ is a surjective linear isometry if and only if there exist surjective linear isometries on $E$, $U$ and $V$,  and a disc automorphism $\sigma$ such that for every $f \in \mathcal{B}_*(\triangle, E)$ and $ z \in \triangle$,
\[ Tf(z)= U[f(0)]+ V [ f(\sigma (z))- f(\sigma(0))].\]
\end{theorem}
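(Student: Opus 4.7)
The plan is to reduce the characterization on $\mathcal{B}_*(\triangle, E)$ to the one already established on $\mathcal{B}_0(\triangle, E)$ in Theorem \ref{mt}, by exploiting the $\ell_1$-decomposition recorded in Remark \ref{l1_sum} together with the absence of $L_1$-projections.

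First, I would identify $\mathcal{B}_*(\triangle, E)$ with $E \oplus_1 \mathcal{B}_0(\triangle, E)$ via the surjective isometry $f \mapsto (f(0), f - f(0))$ of Remark \ref{l1_sum}. Since the hypothesis forces $E$ to be smooth and strictly convex, $E$ carries no nontrivial $L_1$-projections (see \cite{bg}). I would then invoke the argument appearing in the paragraph immediately preceding the statement: any $L_1$-projection $P$ on $\mathcal{B}_0(\triangle, E)$ is in particular a bounded hermitian idempotent, so by Corollary \ref{bp} there is a bounded hermitian $V$ on $E$ with $P(f)(z) = V(f(z))$; then $P^2 = P$ forces $V^2 = V$, and testing the $L_1$-decomposition on functions of the form $h(z)\mathbf{v}$ with $h(z) = z$ transfers an $L_1$-projection down to $E$, a contradiction. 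Hence $\mathcal{B}_0(\triangle, E)$ admits no $L_1$-projections either.

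With both summands free of $L_1$-projections, I would apply Proposition 4.3 of \cite{jj}, which guarantees that any surjective linear isometry of an $\ell_1$-sum under these conditions splits as a direct sum of surjective linear isometries on the two summands. Thus under the identification above, $T$ takes the form $T = U \oplus T_0$, where $U: E \to E$ is a surjective linear isometry and $T_0 : \mathcal{B}_0(\triangle, E) \to \mathcal{B}_0(\triangle, E)$ is a surjective linear isometry. Theorem \ref{mt} then supplies a surjective linear isometry $V$ on $E$ and a disc automorphism $\sigma$ such that $T_0 g (z) = V[(g \circ \sigma)(z) - (g \circ \sigma)(0)]$ for every $g \in \mathcal{B}_0(\triangle, E)$.

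Translating back through the identification, a general $f \in \mathcal{B}_*(\triangle, E)$ is sent to $(U f(0),\, T_0(f - f(0)))$, and evaluating $T_0$ on $g = f - f(0)$ gives
\[ T_0(f - f(0))(z) = V\bigl[(f - f(0))(\sigma(z)) - (f - f(0))(\sigma(0))\bigr] = V\bigl[f(\sigma(z)) - f(\sigma(0))\bigr], \]
so $Tf(z) = U[f(0)] + V[f(\sigma(z)) - f(\sigma(0))]$, as desired. The sufficiency is a direct verification: any operator of this form is bijective (the inverse is constructed exactly as in Theorem \ref{mt} with $f(0)$ sent by $U^{-1}$), and the norm computation reduces on the $\mathcal{B}_0$-component to the one carried out in Theorem \ref{mt} while leaving the $\|f(0)\|_E$-term unchanged. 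The principal subtlety is the correct application of Proposition 4.3 of \cite{jj}, for which the $L_1$-projection-free status of $\mathcal{B}_0(\triangle, E)$ established via Corollary \ref{bp} is the essential ingredient.
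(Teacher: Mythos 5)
Your proposal is correct and follows essentially the same route as the paper: the identification of $\mathcal{B}_*(\triangle, E)$ with $E \oplus_1 \mathcal{B}_0(\triangle, E)$, the use of Corollary \ref{bp} to show $\mathcal{B}_0(\triangle, E)$ admits no $L_1$-projections (testing on $h(z)\mathbf{v}$ with $h(z)=z$), the splitting of $T$ via Proposition 4.3 of \cite{jj}, and the application of Theorem \ref{mt} to the $\mathcal{B}_0$-component. No gaps; the sufficiency verification you sketch matches the computation already carried out in Theorem \ref{mt}.
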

The next corollary extends the results stated in Propositions \ref{form_of_s_i} and \ref{one_para_se_gr} to $\mathcal{B}_*(\triangle, E)$.
\begin{corollary} Let $E$ be a smooth, strictly convex complex Banach space.
Then  $\{T_t\}_{t \in \mathbb{R}}$ is a strongly continuous one parameter group of surjective isometries on $ \mathcal{B}_*(\triangle, E)$ if and only if there exist a continuous one parameter group of disc  automorphisms $\{\sigma_t\}_{t\in \mathbb{R}}$ and  strongly continuous one parameter groups of surjective isometries on $E$, $\{U_t\}_{t\in \mathbb{R}}$ and $\{S_t\}_{t\in \mathbb{R}}$   such that
\[ T_t (f) (z)= U_t(f(0))+S_t ( f(\sigma_t (z)) - f(\sigma_t(0))), \,\,\,\forall f \in \mathcal{B}_0(\triangle, E) \,\,\, \forall z \in \triangle.\]
\end{corollary}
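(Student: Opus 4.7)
The plan is to reduce the assertion to Propositions \ref{form_of_s_i} and \ref{one_para_se_gr} by exploiting the $\ell_1$-splitting $\mathcal{B}_*(\triangle, E) \cong E \oplus_1 \mathcal{B}_0(\triangle, E)$ from Remark \ref{l1_sum}. Applying Theorem \ref{form_iso_in*sp} to each individual $T_t$ produces the representation
\[ T_t(f)(z) = U_t(f(0)) + S_t\bigl(f(\sigma_t(z)) - f(\sigma_t(0))\bigr),\]
so the task is to promote the pointwise $t$-dependence to (a) a group structure on each of $\{U_t\}$, $\{S_t\}$, $\{\sigma_t\}$, and (b) strong continuity of $\{U_t\}$ and $\{S_t\}$ together with continuity of $\{\sigma_t\}$.

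For the group structure I would first isolate the $U_t$-piece by evaluating at $z=0$: the formula gives $T_t(f)(0) = U_t(f(0))$ for every $f$. Testing on constant functions $f \equiv v$ and using $T_0 = \mathrm{Id}$ together with $T_{t+s} = T_t \circ T_s$ immediately yields $U_0 = \mathrm{Id}_E$ and $U_{t+s} = U_t U_s$. Moreover, the subspace $\mathcal{B}_0(\triangle, E) \subset \mathcal{B}_*(\triangle, E)$ is invariant under every $T_t$ (since $U_t(0) = 0$), so the restriction $R_t := T_t|_{\mathcal{B}_0(\triangle, E)}$ is a one-parameter group of surjective isometries on $\mathcal{B}_0(\triangle, E)$ with representation $(S_t, \sigma_t)$ furnished by Theorem \ref{mt}. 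Proposition \ref{form_of_s_i} applied to $\{R_t\}$ then delivers $\sigma_{t+s} = \sigma_t \circ \sigma_s$ and $S_{t+s} = S_t S_s$.

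The continuity claim follows by the same decoupling. Applying $\{T_t\}$ to a constant function $f \equiv v \in E$ gives the elementary estimate $\|U_t v - v\|_E \leq \|T_t f - f\|_{\mathcal{B}}$, so strong continuity of $\{U_t\}$ on $E$ is automatic. Because the correspondence $f \mapsto (f(0), f - f(0))$ is an isometric identification of $\mathcal{B}_*(\triangle, E)$ with $E \oplus_1 \mathcal{B}_0(\triangle, E)$ under which $T_t$ corresponds to $U_t \oplus R_t$, strong continuity of $\{T_t\}$ descends to $\{R_t\}$, and Proposition \ref{one_para_se_gr} then furnishes continuity of $\{\sigma_t\}$ and strong continuity of $\{S_t\}$. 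The converse direction is a direct verification: the three groups give a well-defined family of surjective isometries by Theorem \ref{form_iso_in*sp}, and projecting onto each summand confirms the group and continuity properties.

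I expect the main subtlety to be the clean justification of the decoupling — namely that $T_t$ genuinely splits as $U_t \oplus R_t$ under the $\ell_1$-identification rather than exhibiting some cross-interaction. This relies on the fact recorded in the introduction that $E$ supports no $L_1$-projection, which forces both summands of $\mathcal{B}_*(\triangle, E)$ to be stable under every surjective isometry, and thus legitimizes treating the $E$-component and the $\mathcal{B}_0$-component independently throughout the argument.
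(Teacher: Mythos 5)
Your proposal is correct and follows essentially the same route as the paper: apply Theorem \ref{form_iso_in*sp} to each $T_t$, handle the $E$-component via constant functions, and reduce the $\mathcal{B}_0(\triangle,E)$-component to Propositions \ref{form_of_s_i} and \ref{one_para_se_gr}. Your observation that $\mathcal{B}_0(\triangle,E)$ is invariant so the propositions can be applied to the restricted group as stated (rather than rerunning their proofs, as the paper does) is a minor but clean organizational improvement.
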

\begin{proof}
Since  $E$ is a smooth, strictly convex complex Banach space, it does not support any $L_1$-projection, Theorem \ref{form_iso_in*sp} applies and for each $t \in \mathbb{R},$
\[ T_t (f) (z)= U_t(f(0))+S_t ( f(\sigma_t (z)) - f(\sigma_t(0))), \,\,\,\forall f \in \mathcal{B}_0(\triangle, E) \,\,\, \forall z \in \triangle.\]
The proof given for Proposition \ref{one_para_se_gr} shows that $\{\sigma_t\}_{t\in \mathbb{R}}$ is a one parameter group of disc automorphisms and $\{S_t\}_{t\in \mathbb{R}}$  is a strongly continuous one parameter group of surjective isometries on $ E$. Then by considering constant functions we also derive that $\{U_t\}_{t\in \mathbb{R}}$  is  a strongly continuous one parameter group of surjective isometries on $ E$. The converse implies follows from straightforward computations.
\end{proof}

\begin{corollary}\label{hr_bloch}
Let $E$ be a Hilbert space. If $A$ is a (not necessarily bounded) hermitian operator on $\mathcal{B}_*(\triangle, E)$, then there exist hermitian operators (not necessarily bounded) $U$ and $V$ on $E$ and a continuous group of disc automorphisms $\{\sigma_t\}_{t \in \mathbb{R}}$ such that
\[ A(f)(z) =U[f(0)]+ V [f(z)] + [\partial_t \, \sigma_t(z)]_{t=0} f'(z).\]
If $A$ is bounded then  $ A(f)(z) =U[f(0)]+ V [f(z)],$ with $U$ and $V$ bounded.
\end{corollary}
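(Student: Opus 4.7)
The plan is to mimic the proof of Proposition \ref{herm}, but apply the decomposition theorem for one-parameter groups on $\mathcal{B}_*(\triangle, E)$ (the corollary immediately preceding Corollary \ref{hr_bloch}) instead of the one for $\mathcal{B}_0(\triangle, E)$. First I would invoke Vidav's theorem: since $A$ is hermitian, $\{e^{-itA}\}_{t\in\mathbb{R}}$ is a strongly continuous one-parameter group of surjective isometries on $\mathcal{B}_*(\triangle, E)$. The previous corollary then supplies a continuous one-parameter group of disc automorphisms $\{\sigma_t\}_{t\in\mathbb{R}}$ and strongly continuous one-parameter groups $\{U_t\}_{t\in\mathbb{R}}$ and $\{S_t\}_{t\in\mathbb{R}}$ of surjective isometries on the Hilbert space $E$ such that
\[
e^{-itA}(f)(z) = U_t(f(0)) + S_t\bigl(f(\sigma_t(z)) - f(\sigma_t(0))\bigr)
\]
for every $f \in \mathcal{B}_*(\triangle, E)$ and $z \in \triangle$.

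Next I would differentiate at $t=0$ on a dense invariant core of $\mathcal{D}(A)$ (e.g.\ on functions of the form $z\mapsto p(z)\mathbf{v}$ with $p$ a polynomial vanishing at $0$, plus the constant functions $\mathbf{v} \in E$, which together span a dense subspace). Writing $U$ and $V$ for the (possibly unbounded) generators of $\{U_t\}$ and $\{S_t\}$, which are hermitian on $E$ because $E$ is Hilbert and the groups are strongly continuous groups of isometries, and using $\sigma_0(z)=z$ and $\sigma_0(0)=0$, a direct computation of $-i\,\partial_t|_{t=0}$ of each summand gives
\[
A(f)(z) = U[f(0)] + V[f(z) - f(0)] + \bigl[\partial_t \sigma_t(z)\bigr]_{t=0} f'(z) - \bigl[\partial_t \sigma_t(0)\bigr]_{t=0} f'(0),
\]
from which, after absorbing the value-at-zero pieces into the hermitian operator $U' := U - V$ on $E$ (still hermitian on a Hilbert space), one recovers the stated form $A(f)(z) = U[f(0)] + V[f(z)] + [\partial_t\sigma_t(z)]_{t=0}f'(z)$. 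The main obstacle will be this differentiation step: one must be careful to justify termwise differentiation (the issue is that $\sigma_t$ is not necessarily differentiable in a Banach-space-valued sense in $f$, but the algebraic identities from Proposition \ref{form_of_s_i} and the arguments in Proposition \ref{one_para_se_gr} force differentiability of the scalar-valued symbols $\sigma_t(z)$ and of $f \circ \sigma_t$ on the polynomial core, so the usual Leibniz rule applies).

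For the bounded case I would argue as in the proof of Corollary \ref{bp}: if $A$ is bounded, then the family $f_n(z) = z^n\mathbf{v}$ with $\mathbf{v}\in E_1$ lies in the unit ball of $\mathcal{B}_*(\triangle, E)$ up to a uniformly bounded factor, while the invariant polynomial associated with a nontrivial one-parameter group of disc automorphisms (elliptic, hyperbolic, or parabolic, as listed preceding Proposition \ref{herm}) would force $\|Af_n\|$ to grow unboundedly in $n$. This rules out nontrivial $\{\sigma_t\}$ and reduces the representation to $A(f)(z) = U[f(0)] + V[f(z)]$ with $U$ and $V$ bounded hermitian on $E$, completing the proof.
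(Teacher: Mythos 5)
Your overall route is the intended one: the paper gives no separate argument for this corollary, which is meant to follow, exactly as you propose, from Vidav's theorem together with the immediately preceding corollary decomposing a strongly continuous group on $\mathcal{B}_*(\triangle,E)$ as $T_t(f)(z)=U_t(f(0))+S_t\bigl(f(\sigma_t(z))-f(\sigma_t(0))\bigr)$, followed by differentiation at $t=0$ (the same pattern by which Corollary \ref{nc} follows from Proposition \ref{one_para_se_gr}), and your treatment of the bounded case mirrors Corollary \ref{bp}. Note that a Hilbert space is smooth and strictly convex, so invoking that corollary is legitimate, and on a Hilbert space the generators of the groups $\{U_t\}$, $\{S_t\}$ are indeed $i$ times (possibly unbounded) hermitian operators, so identifying $U$ and $V$ as hermitian is fine up to the usual factor-of-$i$ convention, which the paper itself handles loosely.

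The one step that does not work as written is the final ``absorption.'' Your differentiation correctly yields
\[
A(f)(z)=U[f(0)]+V[f(z)-f(0)]+\bigl[\partial_t\sigma_t(z)\bigr]_{t=0}f'(z)-\bigl[\partial_t\sigma_t(0)\bigr]_{t=0}f'(0),
\]
and the piece $-V[f(0)]$ can indeed be folded into $U':=U-V$ acting on $f(0)$. But the term $-\bigl[\partial_t\sigma_t(0)\bigr]_{t=0}f'(0)$ involves $f'(0)$, not $f(0)$, so it cannot be absorbed into any operator applied to $f(0)$; it vanishes only when the invariant polynomial $P(z)=\bigl[\partial_t\sigma_t(z)\bigr]_{t=0}$ satisfies $P(0)=0$, i.e.\ when the group $\{\sigma_t\}$ fixes the origin, which fails for hyperbolic, parabolic, and elliptic groups with $\tau\neq 0$. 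So either you keep the extra term $-P(0)f'(0)$ in the representation (which is what the computation actually gives; the paper's own statements in Corollary \ref{nc} and Proposition \ref{herm} silently drop the analogous term), or you must supply a separate argument for discarding it --- the claim ``$U':=U-V$ absorbs it'' is not such an argument. The bounded case is unaffected, since there the group of automorphisms is trivial and the term is absent.
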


\section{Generalized bi-circular projections }
In this section we characterize the generalized bi-circular projections on $\mathcal{B}_0(\triangle, E)$. We recall that a generalized bi-circular projection $P$ satisfies   $P^2=P$ and $P+\lambda (I-P)=T$ with $T$ a surjective isometry and $\lambda $ a modulus 1 complex number different from 1, \cite{fosner}. We refer the reader to the following papers for additional information about this type of projections, \cite{bo_ja}.

A straightforward computation yields the following algebraic equation  $T^2-(\lambda +1) T+ \lambda I=0.$

\begin{theorem}\label{gbp}
Let $E$ be a smooth and strictly convex complex  Banach space. Then $P$ is a generalized bi-circular projection on $\mathcal{B}_0(\triangle, E)$ if and only if there exists a reflexive isometry $T$ (i.e. $T^2=I$) such that
\[ P= \frac{I+T}{2}.\]
\end{theorem}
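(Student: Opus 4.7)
The plan is to verify both directions through the quadratic identity that every gbp satisfies. For sufficiency, if $T$ is a surjective isometry with $T^2 = I$, then $P = (I+T)/2$ is idempotent and $2P-I = T$ is an isometry, exhibiting $P$ as a gbp with $\lambda = -1$. For necessity, assume $P$ is a gbp on $\mathcal{B}_0(\triangle, E)$ with $T_1 := P + \lambda(I-P)$ the associated surjective isometry, $|\lambda|=1$, $\lambda \neq 1$. The idempotency of $P$ is equivalent to the algebraic identity $T_1^2 - (1+\lambda)T_1 + \lambda I = 0$. Setting $T := 2P-I$, a direct use of this quadratic gives $T^2 = I$; the only remaining task is to show that $T$ is a surjective isometry.

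By Theorem~\ref{mt}, write $T_1 f(z) = S_1\bigl[(f\circ\sigma_1)(z) - (f\circ\sigma_1)(0)\bigr]$ for a surjective isometry $S_1$ on $E$ and a disc automorphism $\sigma_1$; iteration yields $T_1^2 f(z) = S_1^2\bigl[(f\circ\sigma_1\circ\sigma_1)(z) - (f\circ\sigma_1\circ\sigma_1)(0)\bigr]$. Substituting into the quadratic identity and testing against $f(z) = zv$ for $v \in E$, then differentiating in $z$, one obtains
\[
(\sigma_1 \circ \sigma_1)'(z)\,S_1^2 v \;-\; (1+\lambda)\,\sigma_1'(z)\,S_1 v \;+\; \lambda v \;=\; 0, \qquad z \in \triangle,\ v \in E.
\]
Testing on $f(z) = z^n v$ for $n \geq 2$ and eliminating by differentiation or by taking differences between values of $z$ produces additional functional constraints linking $\sigma_1$, $\sigma_1\circ\sigma_1$, $S_1$, $S_1^2$, and $\lambda$.

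A case analysis on $\sigma_1$ then completes the proof. If $\sigma_1 \neq \text{id}$, whether a nontrivial rotation or a proper M\"obius transformation, the functional identities together with the analytic structure of $\sigma_1'$ and $(\sigma_1\circ\sigma_1)'$ on $\triangle$ force $\sigma_1\circ\sigma_1 = \text{id}$, $S_1^2 = I$, and $\lambda = -1$; hence $T_1^2 = I$ and $T = T_1$ does the job. If $\sigma_1 = \text{id}$, then $T_1 = S_1$ acts pointwise, the quadratic reduces to $(S_1 - I)(S_1 - \lambda I) = 0$ on $E$, and $T$ acts pointwise as $R := (2S_1 - (1+\lambda)I)/(1-\lambda) = P_1 - P_\lambda$, where $P_1, P_\lambda$ are the spectral projections of $S_1$; automatically $R^2 = I$. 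The main obstacle is to show $R$ is isometric on $E$. It suffices to verify that $P_1$ (hence $P_\lambda = I - P_1$) is hermitian on $E$; then $e^{it(P_1 - P_\lambda)} = e^{it}P_1 + e^{-it}P_\lambda$ is an isometry for all $t\in \mathbb{R}$, and evaluating at $t = \pi/2$ yields $iR$ isometric, whence $R$ isometric. The hermitian character of these spectral projections follows from the smoothness and strict convexity of $E$ together with the isometric character of $S_1$, and is essentially the known fact (see \cite{bo_ja,bo_ja1,king}) that gbps on smooth strictly convex complex Banach spaces are averages of the identity with an involutive isometry.
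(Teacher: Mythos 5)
Your overall strategy is the same as the paper's: invoke Theorem~\ref{mt} to write the isometry $T_1=P+\lambda(I-P)$ as $S_1[(f\circ\sigma_1)(\cdot)-(f\circ\sigma_1)(0)]$, substitute into the quadratic identity $T_1^2-(1+\lambda)T_1+\lambda I=0$, test against the monomials $z^n\mathbf{v}$ after differentiating, and split into cases according to $\sigma_1$. However, as written the proposal has two genuine gaps, one in each branch of your case analysis. In the branch $\sigma_1\neq\mathrm{id}$ you simply assert that the functional identities ``force $\sigma_1\circ\sigma_1=\mathrm{id}$, $S_1^2=I$ and $\lambda=-1$.'' That assertion is the entire content of the theorem in this case and is where all the work lies: writing $\sigma_1(z)=\mu\frac{z-\alpha}{1-\overline{\alpha}z}$, one must actually extract from the tested identities that $S_1^2=\lambda\mu^{-3}I$, that $\alpha=0$, that $S_1$ is a scalar multiple of the identity, and then run the dichotomy $\lambda=\mu$ versus $\lambda=\overline{\mu}$ to reach a contradiction with $\lambda\neq-1$. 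None of this is routine elimination; the paper devotes essentially its whole proof to it, and your sketch gives no indication of how the contradiction is produced.

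The second gap is more serious because it concerns a claim that is not merely unproved but unavailable. When $\sigma_1=\mathrm{id}$ the problem reduces, exactly as you say, to showing that a generalized bi-circular projection $Q$ on $E$ itself (with $(S_1-I)(S_1-\lambda I)=0$) has $2Q-I$ isometric, equivalently that the spectral projection $P_1$ of $S_1$ is hermitian. You dispose of this by calling it ``essentially the known fact'' that gbps on smooth strictly convex complex Banach spaces are averages of the identity with an involutive isometry, citing \cite{bo_ja,bo_ja1,king}. Those references establish such representations for gbps on particular function spaces, not on an arbitrary smooth strictly convex $E$, and the general statement does not follow from smoothness and strict convexity alone: one can build a smooth, strictly convex complex norm on $\mathbb{C}^{2}$ (a small homogeneous perturbation of the Euclidean norm by a term proportional to $\mathrm{Re}(\overline{z_1}^{\,3}z_2^{3})$) that is invariant under $\mathrm{diag}(1,e^{2\pi i/3})$ but not under $\mathrm{diag}(1,-1)$; there $Q=\mathrm{diag}(1,0)$ is a gbp with $\lambda=e^{2\pi i/3}$ while $2Q-I$ is not an isometry. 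So the $\sigma_1=\mathrm{id}$ branch cannot be closed by citation; it requires either an additional hypothesis on $E$ or a genuine argument, and you should be aware that this is precisely the delicate point of the theorem (the paper's own computations in this branch also hinge on dividing by $\alpha$ and so tacitly assume $\alpha\neq 0$). Your reduction of ``$R$ isometric'' to ``$P_1$ hermitian'' via $e^{it(P_1-P_\lambda)}$ is correct as a conditional statement, but the hypothesis is exactly what is missing.
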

\begin{proof}
If $P$ is a generalized bi-circular projection then $P+\lambda (I-P)=T$ with $\lambda\in \mathbb{T}\setminus \{1\}$ and $T$ a surjective isometry. An application of Theorem \ref{mt} implies that  there exist a surjective linear isometry $V:E \rightarrow E$ and  a disc automorphism $\sigma $ such that for every $f \in  \mathcal{B}_0(\triangle, E) $ and $z \in \triangle$
\[ Tf(z)=V[ (f\circ\sigma)(z)-(f\circ\sigma)(0)].\] The automorphism $\sigma$ is of the form $\sigma(z)= \mu \frac{z-\alpha}{1-\overline{\alpha} z}$ with $\mu \in \mathbb{T}$ and $|\alpha |<1.$
The condition $P^2=P$ implies that $T^2-(\lambda +1) T+ \lambda I=0.$ Therefore we have
\begin{equation} \label{eq_for_V} V^2 [ f((\sigma \circ \sigma)(z))-f((\sigma \circ \sigma)(0))] -(\lambda +1) V [f(( \sigma)(z))-f(( \sigma)(0))]+\lambda f(z)=0, \end{equation} for every $ f \in \mathcal{B}_0(\triangle, E)$ and $z\in \triangle.$
By differentiating (\ref{eq_for_V}) we obtain
\begin{equation} \label{deq} V^2 [ f'((\sigma \circ \sigma)(z)) \sigma'(\sigma(z)) \sigma'(z) ]-(\lambda +1) V [f'(( \sigma)(z)) \sigma'(z)] +\lambda f'(z)=0.\end{equation}
The equation displayed in (\ref{deq}) applied to $f(z)= \frac{z^2}{2} \mathbf{v}$ (with $\mathbf{v}$ a vector in $E$ of norm 1) and with $z=\alpha$ yields
\[ V^2 \mathbf{v}= \frac{\lambda}{\mu^3} \mathbf{v}.\]
Applying  (\ref{deq}) to $f(z)= \frac{z^2}{2} \mathbf{v}$ and setting $z=0$ we obtain
\[ (V^2 \mathbf{v}) \, \mu^3 \frac{-\mu \alpha -\alpha}{1+\mu |\alpha|^2} \frac{1- |\alpha|^2}{(1+\mu |\alpha|^2)^2}(1-|\alpha|^2) - ( V \mathbf{v} ) (\lambda +1) (-\mu \alpha) \mu (1-|\alpha|^2) =0.\]
We assume that $\lambda \neq -1$, then straightforward calculations show that
\begin{equation}\label{eq1_for_V}  V= \frac{\lambda (\mu+1)(1-|\alpha|^2)}{(\lambda +1) \mu^2 (1+\mu |\alpha|^2)^3} I.\end{equation}
This last equation implies that $\mu \neq -1$.
Once more, applying equation (\ref{deq}) to $f(z)= z \mathbf{v}$ and setting $z=\alpha$ we obtain
 \begin{equation}\label{eq2_for_V} V = \frac{\lambda (\mu+1)(1-|\alpha|^2)}{\mu^2 (\lambda +1)} I.\end{equation}
  From (\ref{eq1_for_V}) and (\ref{eq2_for_V}) we derive  $(1+\mu |\alpha|^2)^3=1.$ This leads to $1+\mu |\alpha|^2=1$, $1+\mu |\alpha|^2= \cos \frac{\pi}{3} + i \sin \frac{\pi}{3} $ or $1+\mu |\alpha|^2= \cos \frac{2\pi}{3} + i \sin \frac{2\pi}{3}. $ It is easy to show that only the first equation leads to the solution $\alpha =0.$ Therefore  $V= \frac{\lambda (\mu+1)}{ (\lambda +1) \mu^2} I$ and $\sigma (z)= \mu z.$ Since $V$ is an isometry the $|\mu+1|=|\lambda +1|$, and thus $\mu = \lambda $ or $\lambda = \overline{\mu}. $

 We consider two cases.

 1. If $\lambda = \mu$ then $V= \overline{\lambda} I$ and equation (\ref{eq_for_V}) applied to $f(z)= z \mathbf{v}$ implies
 \[ \lambda^4 -\lambda (\lambda+1) +\lambda =0\]
 and thus $\lambda=1.$ This is impossible.

 2. If $\lambda = \overline{\mu}$ then $V= \overline{\mu}^2 I$. We   differentiate equation (\ref{eq_for_V}) and applied to $f(z)= z^3 \mathbf{v}$ to obtain
 \[ \mu^4 - (\mu +1) \mu^2 + \mu=0.\] This equation has solutions $\pm 1$. Either case leads to a contradiction since we have assumed that $\lambda\neq -1.$

 This contradiction shows that $\lambda=-1$ and (\ref{deq}) reduces to
\begin{equation} \label{leq} V^2 [ f'((\sigma \circ \sigma)(z)) \sigma'(\sigma(z)) \sigma'(z) ] = f'(z),\end{equation}  which applied to $f(z)= \frac{z^2}{2} \mathbf{v}$ with $z=\alpha$ yields
\[ (V^2 \mathbf{v} ) (-\mu^3 \alpha) =\alpha \mathbf{v}. \]
Therefore $V^2 = -\overline{\mu}^3 I.$

The equation (\ref{leq}) applied to $f(z)= z \mathbf{v}$  and $z=\alpha$ gives
\[ -\overline{\mu}^3 \sigma'(0) \sigma'(\alpha) = 1.\] Therefore $\mu=-1$ and $V^2=I.$ We also have $\sigma\circ \sigma (z)=z.$ Therefore $T^2=I$ and proves that $P$ is the average of the identity operator with a reflection. The reverse implication is clear.

\end{proof}
A generalized bi-circular projection $P$ on $\mathcal{B}_*(\triangle, E)$ is given
\[ P= \frac{1}{1-\lambda} (T -\lambda I)\]
with $T$ a surjective isometry on $\mathcal{B}_*(\triangle, E)$ and $\lambda$ a modulus $1$ scalar different from $1$. Theorem \ref{form_iso_in*sp} implies the existence of surjective isometries on $E$, $U$ and $V$, also a disc automorphism $\sigma$  such that $T(f) (z)= U (f(0))+ V [ f(\sigma (z))-f(\sigma(0))].$

The form for the surjective isometries  on $\mathcal{B}_*(\triangle, E)$ implies that $P$ leaves invariant the subspace of all constant functions and also $\mathcal{B}_0(\triangle, E)$. Applying Theorem \ref{gbp} we conclude  that the restriction of $P$ to $\mathcal{B}_0(\triangle, E)$ is the average of $I$ with an isometric reflection on $\mathcal{B}_0(\triangle, E)$, thus $V^2=Id$ and $\sigma^2=id_{\triangle}$. Therefore $P$ is the average of the identity on  $\mathcal{B}_*(\triangle, E)$ with a surjective isometry $T$. Since $T=2P-I$ is such that  $T^2=I$,  then  generalized bi-circular projections on $\mathcal{B}_*(\triangle, E)$ are the average of the identity operator with an isometric reflection.


\end{document}